\numberwithin{equation}{section}
\newcommand{\R}{\mathbb{R}}
\newcommand{\W}{\mathbb{W}}
\newcommand{\HH}{\mathfrak{H}}
\newtheorem{theorem}{Theorem}[section]
\newtheorem{thm}[theorem]{Theorem}
\newtheorem{prop}[theorem]{Proposition}
\newtheorem{lemma}[theorem]{Lemma}
\newtheorem{remark}[theorem]{Remark}
\numberwithin{equation}{section}
\numberwithin{equation}{section}
\newtheorem{pro}[theorem]{Proposition}
\newtheorem{hypo}[theorem]{Hypothesis}
\newtheorem{rem}[theorem]{Remark}
\def\si{{\sigma}}
\def\RR{\mathbb{R}}
\def\EE{\mathbb{E}}
\def\cF{{\cal F}}
\def\si{{\sigma}}
\def\si{{\sigma}}
\def\om{{\omega}}
\def\dom{{\rm Dom}}
\def\dnW{{W}}
\begin{document}
	
\date{}
\title{Drift parameter estimation for nonlinear stochastic differential equations driven by fractional Brownian motion}

  \author{ Yaozhong Hu \and David Nualart \and Hongjuan Zhou}

 \maketitle
 
 \begin{abstract}
We derive the strong consistency of  the least squares estimator for the drift coefficient  of a fractional stochastic differential system. The drift coefficient  is one-sided dissipative Lipschitz and the  driving  noise  is additive  and fractional with   Hurst parameter $H \in  (\frac{1}{4}, 1)$.    
We assume that continuous observation is possible.  The main tools are
ergodic theorem and Malliavin calculus. As a by-product, we  derive a maximum inequality for  Skorohod integrals, which plays an  
important role to obtain the strong consistency of the least squares estimator.
 \end{abstract}
 
\textbf{Keywords.} Fractional Brownian motion, parameter estimation, nonlinear stochastic differential equation, one-sided dissipative Lipschitz condition, maximum inequality,  moment estimate, H\"older continuity,  strong consistency. 

\vspace{10pt}

\numberwithin{equation}{section}

\section{Introduction and main result}

In this paper, we study a parameter estimation problem for the following stochastic differential equation (SDE)  driven by a fractional Brownian motion (fBm)
  \begin{equation}
   dX_t =   -f(X_t) \theta  dt +   \sigma dB_t \,, \quad t\ge 0 \,,
  \label{nl.sde}
  \end{equation}
where $X_0=x_0 \in \mathbb{R}^m$ is a given initial condition.  The notations appearing in the above equation are explained as follows.  For the diffusion part, $B=(B^1, \dots, B^d)$ is a $d$-dimensional  fBm of Hurst parameter $H \in (0,1)$. The diffusion coefficient $\sigma=(\si_1, \dots, \si_d)$ is an $m\times d$ matrix, with $ \sigma_j$, $j=1, \dots, d
$
being   given vectors in $\RR^m$. For the drift part, the function $f:\RR^m\rightarrow \RR^{m\times l}$  satisfies some regularity and growth  conditions that we shall specify below.  We write $f(x)=(f_1(x), \dots, f_l(x))$, with $f_j(x)$, $j=1, \dots, l$, being vectors in $\RR^m$.
We assume that $\theta= (\theta_1, \dots, \theta_l) \in \mathbb{R}^l$ is an  unknown constant parameter.  
In equation (\ref{nl.sde}) we have used matrix notation, where the vectors are  understood as column vectors. 
With above notations, we may write \eqref{nl.sde}
as
\[
dX_t=-\sum_{j=1}^l \theta_j f_j(X_t)dt+\sum_{j=1}^d \si_jdB_t^j\,.
\]

Our objective is to  estimate the parameter vector $\theta$, from the continuous  observations of the process $X=\{X_t, t\ge 0 \}$  in a finite interval $[0,T]$. 
 We consider a least squares type estimator, which consists of minimizing formally the quantity $\int_0^T |\dot X_t + f(X_t) \theta  |^2dt $, where and in what follows we   use $|\cdot|$ to denote the Euclidean norm of a vector or the Hilbert-Schmidt norm of a matrix.  From this procedure, the least squares estimator (LSE) is given explicitly by 
\begin{equation}
  \hat\theta_T = - \left(\int_0^T (f^{tr}f)(X_t) dt\right)^{-1} \int_0^T   f^{tr}(X_t)  dX_t  \,,
\end{equation}
where $f^{tr}$ denotes the transpose of the matrix $f$.
Substituting \eqref{nl.sde} into the above expression we have 
\begin{equation} \label{theta.est}
  \hat\theta_T =  \theta - \left(\int_0^T  (f^{tr}f)(X_t) dt\right)^{-1}   \int_0^T f^{tr}(X_t) \sigma dB _t \,.
\end{equation}
In the above equation,
the stochastic integral with respect to the fBm is understood as a divergence integral (or Skorohod integral). See Section  2 for  its  definition.

In order to  state  the main result of the paper, we introduce the following hypothesis. 

\begin{hypo} \label{f.cond12}
The functions $f_j$, $1\le j\le m$ are continuously differentiable and there is a positive constant $L_1$
such that 	the Jacobian matrices  $\nabla f_j(x) \in \mathbb{R}^{m \times m}$ satisfy $\sum\limits_{j=1}^l \theta_j \nabla f_j (x) \geq L_1 I_m$ for all $x\in \R^m$, where $I_m$ is the $m \times m$ identity matrix.  
\end{hypo}
In the above hypothesis  and in what follows we use the notation $A\ge B$ to denote the fact that $A-B$ is a non-negative definite matrix.

We denote by $\mathcal{C}^1_p(\RR^m)$ the class of functions $g\in \mathcal{C}^1(\RR^m) $ such that
 there are two positive constants $L_2$ and $\gamma$  with
	\begin{equation}  \label{1.4}
		|g(x)| + |\nabla g(x)| \leq L_2 (1+|x|^\gamma) \,,
	\end{equation}
for  all $x\in \RR^m$. We denote by $\mathcal{C}^2_p(\RR^m)$ the class of functions $g\in \mathcal{C}^2(\RR^m) $ such that
 there are two positive constants $L_2$ and $\gamma$  with
\begin{equation}   \label{1.5}
		|g(x)| + |\nabla g(x)| +  |\mathds{H}(g)(x) | \leq L_2 (1+|x|^\gamma) \,,
	\end{equation}
for  all $x\in \RR^m$,
where $\mathds{H}(g) =\left(\frac{\partial ^2 g}{\partial x_i\partial x_j}    \right)_{1\le i,j\le m}$  denotes  Hessian matrix of  $g$.

It is easy to see that under Hypothesis \ref{f.cond12}, $f$ satisfies the one-sided dissipative Lipschitz condition:
\begin{equation} 
\langle x - y,   (f(x) - f(y)) \theta \rangle \geq   L_{1} |x-y|^2 \,,\quad \forall \ x, y\in \RR^m\,.   \label{1.6}
\end{equation}
According to  the papers \cite{gkn, mh, nt} and the references therein, under  Hypothesis \ref{f.cond12} and  assuming  $f_{ij}\in\mathcal{C}^1_p(\RR^m)$,
for all $1\le i \le m$, $1\le j \le l$,
the SDE \eqref{nl.sde} admits a unique solution $X_t$ in $\mathcal{C}^{\alpha}(\mathbb{R}_+; \mathbb{R}^m)$ for all $\alpha < H$.  Now we state the main result of this paper. 
 
\begin{theorem} \label{thm.cons}
Assume Hypothesis  \ref{f.cond12} and  that the components of $f$ belong to $\mathcal{C}^1_p(\RR^m)$ when $H \in [\frac{1}{2}, 1)$, and  they belong to
$\mathcal{C}^2_p(\RR^m)$  when $H \in (\frac{1}{4}, \frac{1}{2})$. 
Suppose that   $ \mathbb{P} \left(  \det (f^{tr}f)(\overline{X}) >0 \right) >0$, where $\overline{X}$ is the  random variable  appearing in Theorem  \ref{ergodic}.
 Then the least squares estimator  $\hat \theta_T$ of the parameter $\theta$ is strongly consistent in the sense that $
 \displaystyle \lim_{T\rightarrow\infty} |\hat \theta_T-\theta|=0$  almost surely.
\end{theorem}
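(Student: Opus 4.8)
The plan is to normalize by $T$ and treat the two factors in \eqref{theta.est} separately. Writing $A_T := \int_0^T (f^{tr}f)(X_t)\,dt$ and $\xi_T := \int_0^T f^{tr}(X_t)\,\sigma\,dB_t$, we have $\hat\theta_T - \theta = -A_T^{-1}\xi_T$, and hence the operator-norm bound
\[
|\hat\theta_T - \theta| \le \frac{|\xi_T|}{\lambda_{\min}(A_T)},
\]
where $\lambda_{\min}$ denotes the smallest eigenvalue. It therefore suffices to show that $\lambda_{\min}(A_T)$ grows linearly in $T$ while $|\xi_T| = o(T)$ almost surely.

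For the denominator I would apply the ergodic theorem (Theorem \ref{ergodic}) to the continuous, polynomially bounded function $g = f^{tr}f$, obtaining
\[
\frac{1}{T} A_T \longrightarrow M := \mathbb{E}\big[(f^{tr}f)(\overline{X})\big] \quad \text{a.s.}
\]
Since $(f^{tr}f)(x)$ is nonnegative definite for every $x$, so is $M$; moreover, for any unit vector $v\in\RR^l$,
\[
v^{tr} M v = \mathbb{E}\big[|f(\overline{X})v|^2\big] \ge \mathbb{E}\big[|f(\overline{X})v|^2\,\mathbf{1}_{\{\det(f^{tr}f)(\overline{X})>0\}}\big] > 0,
\]
because on the event $\{\det(f^{tr}f)(\overline{X})>0\}$ the matrix is strictly positive definite, and this event has positive probability by hypothesis. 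Hence $M$ is positive definite, $\lambda_{\min}(A_T) = T\,\lambda_{\min}(A_T/T) \to \infty$ linearly, and $\lambda_{\min}(A_T)^{-1} = O(T^{-1})$ almost surely.

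The heart of the argument is the numerator, where I must show the Skorohod integral obeys $|\xi_T| = o(T)$ a.s., and this I would reach through moment estimates followed by a Borel--Cantelli argument. First I would collect a priori bounds on the solution: using the one-sided dissipative Lipschitz condition \eqref{1.6} together with the H\"older regularity of $X$, establish uniform moment estimates $\sup_{t\ge 0}\mathbb{E}|X_t|^p < \infty$ for all $p\ge 1$, and corresponding bounds on the Malliavin derivative $D_s X_t$, which solves the associated linearized equation. Writing $u_t = f^{tr}(X_t)\sigma$ and exploiting the polynomial growth of $f$ and $\nabla f$, I would then bound $\mathbb{E}\|u\|_{\mathcal H}^p$ and $\mathbb{E}\|Du\|_{\mathcal H\otimes\mathcal H}^p$. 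The scaling of the $\mathcal H$-norm contributes a factor of order $T^{2H}$ (for $H>1/2$ this comes from $\int_0^T\!\int_0^T |s-t|^{2H-2}\,ds\,dt \asymp T^{2H}$, with an analogous self-similar scaling when $H<1/2$), so that the maximal inequality for Skorohod integrals advertised in the abstract yields a bound of the form
\[
\mathbb{E}\,|\xi_T|^p \le C_p\, T^{\beta p}, \qquad \beta < 1,
\]
with $\beta$ essentially of order $H$. Applying this along the sequence $T_n = n$, using the maximal inequality to control $\sup_{n\le T\le n+1}|\xi_T|$, and choosing $p$ large enough, a Borel--Cantelli argument upgrades the moment estimate to $|\xi_T|/T \to 0$ a.s., which combined with the denominator bound completes the proof.

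I expect the decisive difficulty to lie in the numerator in the rough regime $H\in(\frac14,\frac12)$: there the inner product on $\mathcal H$ is not given by a positive kernel, the Malliavin estimates on $f^{tr}(X_t)$ genuinely require the second-order regularity encoded in the assumption $f\in\mathcal C^2_p$, and controlling $\mathbb{E}\|Du\|_{\mathcal H\otimes\mathcal H}^p$ with the sharp power of $T$ is delicate. This is precisely why a dedicated maximal inequality for Skorohod integrals is needed here, in place of the Burkholder--Davis--Gundy estimates available in the Brownian semimartingale setting.
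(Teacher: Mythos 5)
Your proposal is correct and follows essentially the same route as the paper: the decomposition $\hat\theta_T-\theta=-A_T^{-1}\xi_T$, the ergodic theorem giving $T^{-1}A_T\to \mathbb{E}[(f^{tr}f)(\overline X)]$ with invertibility from the positive-probability determinant condition, and then moment bounds on the Skorohod integral of order $T^{\beta p}$ with $\beta<1$ (namely $\beta=H$ for $H>\tfrac12$ and $\beta=2H+\lambda$ for $H\in(\tfrac14,\tfrac12)$), upgraded to $|\xi_T|=o(T)$ a.s.\ via the maximal inequality on unit intervals plus Chebyshev and Borel--Cantelli, exactly as in the paper's two-step argument. The only genuine difference is your proof that $\mathbb{E}[(f^{tr}f)(\overline X)]$ is positive definite, which uses the quadratic form $v^{tr}Mv=\mathbb{E}|f(\overline X)v|^2>0$ directly and is somewhat more elementary than the paper's appeal to the Minkowski determinantal inequality and Jensen's inequality; both arguments are valid.
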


\begin{remark}
 Condition $ \mathbb{P} \left(  \det (f^{tr}f)(\overline{X}) >0 \right) >0$ means that $\nu (\det (f^{tr}f) >0)>0$, where $\nu $ is the invariant measure of the SDE (\ref{nl.sde}).  A sufficient condition for this  to hold is  $\det(f^{tr}f) (x) >0$ for all $x\in \R^m$. 
 \end{remark}

\begin{remark}
When $f(x)=x$ is linear, this inference problem of $\theta$ has been extensively studied in the literature and various kinds of estimation methods are proposed. We refer interested readers to \cite{hn,hnz} and  the references therein. 

For a general nonlinear case, let us first mention the paper \cite{tv}
in which the maximum likelihood estimator is analyzed.
The paper \cite{nt} is more related to our work, where Neuenkirch and Tindel studied the discrete observation case  and proved the strong consistency of the following estimator 
\[
\bar\theta_n = {\rm argmin}_{\theta} \left|\frac{1}{n\alpha_n^2}\sum_{k=0}^{n-1}\left(|X_{t_{k+1}} - X_{t_k} - f(X_{t_k}; \theta) \alpha_n|^2 - \sum_{j=1}^d |\sigma_j|^2 \alpha_n^{2H}\right) \right|
\]
when $H>\frac{1}{2}$, where $\alpha_n = t_k - t_{k-1}$ satisfies that $\alpha_n n^\alpha$ converges to a constant as $n \to \infty$ for some small $\alpha>0$. 
Their approach relies on Young's inequality from the rough path theory to handle Skorohod integrals, which cannot be applied for the case $H \in (0, \frac{1}{2}]$.
\end{remark} 

We will give the proof of our main theorem in Section \ref{proof}.  The proof relies on a maximum inequality for  Skorohod integrals  which will be presented   in Section \ref{maxineq}.  The main tools we use are Malliavin calculus and ergodic theorem, which will be recalled in Section \ref{s.preliminary}.  

\section{Preliminaries}\label{s.preliminary}

First, let us recall an ergodic theorem for the solution to equation (\ref{nl.sde}) that is crucial for  our
arguments. 
The $d$-dimensional fBm  $B = \{(B^1_t, \dots, B^d_t), t\ge 0 \}$ with Hurst parameter $H \in (0,1)$, is a zero mean Gaussian process   whose components are independent and have the  covariance function 
\begin{equation}
 \mathbb{E}(B_t^i B_s^i)= R_H(t,s):=\frac{1}{2}(|t|^{2H}+|s|^{2H}-|t-s|^{2H}),
 \label{1eq1}
\end{equation}
for $i=1,\dots, d$.
The probability space $(\Omega, \mathcal{F}, \mathbb{P})$ we are taking  is the canonical probability space  of the  fractional Brownian motion. Namely,  $\Omega = C_0(\mathbb{R_+}; \mathbb{R}^d)$ is the set of continuous functions from $\RR_+$
to $\RR^d$ equipped with the uniform  topology on any compact 
interval;  $\mathcal{F}$ is the Borel $\sigma$-algebra, and $\mathbb{P}$ is the   probability measure on $(\Omega, \cF)$ 
such that the coordinate process 
$  B_t(\om)=\om(t)$  is a fractional Brownian motion with Hurst parameter $H \in (0,1)$.

We define the shift operators $\mu_t: \Omega \to \Omega$ as $$\mu_t \omega(\cdot) = \omega(\cdot + t) - \omega(t), \ t \in \mathbb{R}, \omega \in \Omega \,.$$ 
The probability measure $\mathbb{P}$ is invariant with respect to the shift operators $\mu_t$. 
The ergodic property of the SDE \eqref{nl.sde} is summarized in the following theorem (see \cite{mh,nt}).

\begin{theorem} \label{ergodic}
Assume the drift function $f$  satisfies Hypothesis \ref{f.cond12} and  its components belong to $\mathcal{C}^1_p(\RR^m)$.  Then, the following results hold:
\begin{enumerate}
 \item [(i)] There  exists a random variable $\overline X: \Omega \to \mathbb{R}^m$ with   $\mathbb{E}|\overline X|^p < \infty$ for all $p \geq 1$ such that 
\begin{equation}
    \lim_{t \to \infty} |X_t(\omega) - \overline X(\mu_t \omega)| = 0  
\end{equation}
 for $\mathbb{P}$-almost all $\omega \in \Omega$.  

 \item [(ii)]For any function $g \in \mathcal{C}^1_p(\mathbb{R}^m)$,  we have 
\begin{equation}
 	\lim_{T \to \infty} \frac{1}{T} \int_0^T g(X_t) dt = \mathbb{E}[ g(\overline X) ] \qquad \text{$\rm P$-a.s}.
\end{equation}

\end{enumerate}

\end{theorem}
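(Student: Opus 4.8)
The plan is to exploit the dissipativity in Hypothesis~\ref{f.cond12} to produce a \emph{pathwise} contraction between solutions and then to build $\overline{X}$ as a pullback limit. The key observation is that because the noise is additive, the difference of two solutions $X^x$ and $X^y$ driven by the same path $\om$ but issued from $x$ and $y$ solves an ordinary differential equation in which the noise cancels, namely $\tfrac{d}{dt}(X^x_t-X^y_t)=-(f(X^x_t)-f(X^y_t))\theta$. Together with \eqref{1.6} this yields
\[
\tfrac{d}{dt}|X^x_t-X^y_t|^2=-2\langle X^x_t-X^y_t,\,(f(X^x_t)-f(X^y_t))\theta\rangle\le-2L_1|X^x_t-X^y_t|^2,
\]
and hence the exponential contraction $|X^x_t-X^y_t|\le e^{-L_1 t}|x-y|$. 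The same inequality with $y=0$ gives the coercivity $\langle x,f(x)\theta\rangle\ge L_1|x|^2-C|x|$, which, combined with the polynomial growth from $\mathcal{C}^1_p(\RR^m)$ and the sublinear growth of the fBm sample paths, provides the pathwise and moment a priori bounds needed below.

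Next I would write the solution as a cocycle $\phi(t,\om,x)$ over the measure-preserving flow $\mu_t$, so that $\phi(t+s,\om,x)=\phi(t,\mu_s\om,\phi(s,\om,x))$ and $X_t=\phi(t,\om,x_0)$, and define
\[
\overline{X}(\om):=\lim_{s\to\infty}\phi(s,\mu_{-s}\om,x_0).
\]
For $s'>s$ the cocycle identity gives $\phi(s',\mu_{-s'}\om,x_0)=\phi(s,\mu_{-s}\om,\phi(s'-s,\mu_{-s'}\om,x_0))$, so the contraction bounds the increment of $s\mapsto\phi(s,\mu_{-s}\om,x_0)$ by $e^{-L_1 s}|\phi(s'-s,\mu_{-s'}\om,x_0)-x_0|$; the a priori bounds control the second factor by a random quantity growing only sublinearly in $s$ and $s'$, so the prefactor $e^{-L_1 s}$ dominates, the family is Cauchy, and the limit exists a.s. and is independent of $x_0$. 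Since $\mu_{-s}$ preserves $\mathbb{P}$, the variable $\phi(s,\mu_{-s}\om,x_0)$ has the law of $X_s$, so the uniform moment bounds and Fatou give $\mathbb{E}|\overline{X}|^p<\infty$ for all $p\ge1$. Applying the cocycle identity once more produces the equivariance $\overline{X}(\mu_t\om)=\phi(t,\om,\overline{X}(\om))$, and part~(i) is then immediate from the contraction: $|X_t(\om)-\overline{X}(\mu_t\om)|=|\phi(t,\om,x_0)-\phi(t,\om,\overline{X}(\om))|\le e^{-L_1 t}|x_0-\overline{X}(\om)|\to0$.

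For part~(ii) I would first use part~(i) to pass to the stationary field $\overline{X}(\mu_t\om)$. Because $g\in\mathcal{C}^1_p(\RR^m)$ is continuous and $g(X_t)-g(\overline{X}(\mu_t\om))\to0$ a.s. by part~(i), while $t\mapsto g(X_t)-g(\overline{X}(\mu_t\om))$ is continuous and hence locally integrable, the elementary Cesàro averaging fact gives $\tfrac1T\int_0^T[g(X_t)-g(\overline{X}(\mu_t\om))]\,dt\to0$. Writing $G:=g(\overline{X})$, which lies in $L^1(\mathbb{P})$ by the moment bound on $\overline{X}$ and the polynomial growth of $g$, it remains to treat $\tfrac1T\int_0^T G(\mu_t\om)\,dt$, the time average of $G$ along the measure-preserving flow $\mu_t$. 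Birkhoff's ergodic theorem gives its convergence to $\mathbb{E}[G\mid\mathcal{I}]$, where $\mathcal{I}$ is the $\mu$-invariant $\sigma$-algebra.

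The main obstacle is the final identification of this limit as the constant $\mathbb{E}[g(\overline{X})]$, which requires the flow $\mu_t$ to be \emph{ergodic} on the canonical fBm space. In contrast to the contraction and the pullback construction, which are soft consequences of one-sided dissipativity and additivity of the noise, the ergodicity of $\mu_t$ is the genuinely nontrivial input: it rests on the Gaussian structure of $B$, whose stationary-increment process has a non-atomic spectral measure, forcing the shift to be mixing and therefore ergodic. Invoking this property of the fractional Brownian shift, as in \cite{mh,nt} and the references therein, collapses $\mathbb{E}[G\mid\mathcal{I}]$ to $\mathbb{E}[g(\overline{X})]$ and completes the proof.
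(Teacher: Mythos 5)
The paper itself does not prove Theorem \ref{ergodic} at all --- it imports it from \cite{mh,nt} --- and your sketch is essentially the standard argument underlying those references: pathwise exponential contraction from the one-sided dissipativity \eqref{1.6} (the noise cancels because it is additive), a pullback limit $\overline X(\om)=\lim_{s\to\infty}\phi(s,\mu_{-s}\om,x_0)$ via the cocycle property, stationarity plus uniform moment bounds to get $\mathbb{E}|\overline X|^p<\infty$, and Birkhoff's theorem combined with ergodicity of the Gaussian shift for part (ii); so the approach is the same and correct in outline. If you were to write it out in full, two points need tidying: the pullback $\phi(s,\mu_{-s}\om,x_0)$ requires the two-sided extension of the fBm (the paper's $\Omega=C_0(\mathbb{R}_+;\mathbb{R}^d)$ with $\mu_t$ defined for $t\in\mathbb{R}$ is implicitly two-sided), and in part (ii) the step $g(X_t)-g(\overline X(\mu_t\om))\to 0$ does not follow from continuity of $g$ alone (since $g$ is not uniformly continuous) but needs the local Lipschitz estimate $|g(x)-g(y)|\le C\left(1+|x|^\gamma+|y|^\gamma\right)|x-y|$ from \eqref{1.4} together with a.s.\ subpolynomial growth of $|\overline X(\mu_t\om)|$ (from stationarity and finite moments) against the exponential rate in part (i); also, strictly speaking, a non-atomic spectral measure of the stationary increment process yields ergodicity (which is all you need), while mixing follows from the decay of the increment covariances.
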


Next  we recall some background material on  the Malliavin calculus for the  fBm $B$. 
 Let $\mathcal{E}^d$ denote the set of  $\mathbb{R}^d$-valued step functions on $[0,\infty)$ with compact support. The Hilbert space $\mathfrak{H}^d$ is defined as the closure of $\mathcal{E}^d$ endowed with the inner product
$$
\langle ( \mathbb{1}_{[0,s_1]} , \dots , \mathbb{1}_{[0,s_d]}),  ( \mathbb{1}_{[0,t_1]} , \dots , \mathbb{1}_{[0,t_d]}) \rangle_{\mathfrak{H}^d}= \mathbb{E} \left[   \left(\sum_{j=1}^d B^j_{s_j}  \right) \left(\sum_{j=1}^d B^j_{t_j} \right) \right]= \sum\limits_{i=1}^d R_H(s_i, t_i) \,.
$$
 Then the mapping $(\mathbb{1}_{[0,t_1]},  \dots , \mathbb{1}_{[0, t_d]}) \mapsto \sum_{j=1}^d B^j_{s_j} $  can be extended to a linear isometry between $\mathfrak{H}^d$ and the Gaussian space $\mathcal{H}_1$ spanned by $B$. We denote this isometry by $\varphi \in \mathfrak{H}^d \mapsto B(\varphi)$. 
 For $d=1$, we simply write $\mathcal{E}=\mathcal{E}^1$ and $\mathfrak{H}=\mathfrak{H}^1$.

When $H=\frac{1}{2}$, $B$ is just  a $d$-dimensional Brownian motion and $\mathfrak{H}^d= L^2( [0,\infty); \R^d)$. When $H \in (\frac{1}{2}, 1)$, let $\mathfrak{|H|}^{d}$ be the linear space of  $\mathbb{R}^d$-valued measurable functions $\varphi$ on $[0,\infty)$ such that
\[
  \|\varphi\|_{|\mathfrak{H}|^{ d}}^2 = \alpha_H \sum_{j=1}^d  \int_{[0,\infty)^{2}} |\varphi^j_{r}||\varphi^j_{s}| |r-s|^{2H-2}  drds  < \infty \,,
\]
where $\alpha_H = H(2H-1)$. Then $|\mathfrak{H}|^{ d}$ is a Banach space with the norm $\|\cdot\|_{|\mathfrak{H}|^ d}$ and $\mathcal{E}^{d}$ is dense in $|\mathfrak{H}|^{ d}$.  Furthermore, for any $\varphi \in L^{\frac{1}{H}}([0,\infty); \mathbb{R}^d)$, we have
\begin{equation} \label{lh.norm}
  \|\varphi\|_{|\mathfrak{H}|^{ d}} \leq b_{H,d} \|\varphi\|_{L^{\frac{1}{H}}([0,\infty);\mathbb{R}^d)} \,,
\end{equation}
for some constant $b_{H,d} > 0$ (See \cite{nu}). Thus, we have  continuous embeddings $   L^{\frac{1}{H}}([0,\infty);\mathbb{R}^d) \subset |\mathfrak{H}|^{d} \subset \mathfrak{H}^{ d}$ for $H > \frac{1}{2}$.

When $H \in (0,\frac{1}{2})$, the covariance of the fBm $B^j$ can be expressed as
\[
 R_H(t,s) = \int_0^{s \wedge t} K_H(s,u)K_H(t,u) du \,,
\]
where $K_H(t,s)$ is a square integrable kernel defined as
\[
  K_H(t,s) = d_H \left(\left(\frac{t}{s}\right)^{H-\frac{1}{2}} (t-s)^{H-\frac{1}{2}} - (H-\frac{1}{2}) s^{\frac{1}{2}-H} \int_s^t v^{H-\frac{3}{2}} (v-s)^{H-\frac{1}{2}} dv\right) \,,
\]
for $0 <s<t$, with $d_H$ being a constant depending on $H$ (see \cite{nu}). The kernel $K_H$ satisfies the following estimates
\begin{equation}\label{kh.est1}
	|K_H(t,s)| \leq c_H \left((t-s)^{H-\frac{1}{2}} + s^{H-\frac{1}{2}}\right) \,,
\end{equation}
and
\begin{equation}\label{kh.est2}
	\left|\frac{\partial K_H}{\partial t} (t,s)\right| \leq c'_H (t-s)^{H-\frac{3}{2}} \,,
\end{equation}
for all $s<t$ and for some constants $c_H, c'_H$. 
Now we define a  linear operator $K_{H}$ from $\mathcal{E}^d$ to $L^2([0,\infty); \RR^d)$ as
\begin{equation}\label{kstar}
  K_{H}(\phi)(s) = \left( K_H(T,s)\phi(s) + \int_s^T (\phi(t) - \phi(s)) \frac{\partial K_H}{\partial t} (t,s) dt  \right) \mathbb{1}_{[0,T]} (s)\,,
\end{equation}
where the support of $\phi$ is included in $[0,T]$. One can show that this definition does not depend on $T$.
Then the operator $K_{H}$ can be extended to an isometry between the Hilbert space $\mathfrak{H}^d$ and $L^2([0,\infty);\RR^d)$ (see \cite{nu}), and if
$\phi \in \mathfrak{H}^d$ has support in $[0,T]$, then  (\ref{kstar}) holds.
 For $\phi  \in  \mathfrak{H}^d$ with support in $[0,T]$, we  define
\begin{eqnarray*}
  \|\phi\|_{K^{d}_T} ^2 &: = & \int_0^T |\phi(t)|^2 \left((T-t)^{2H-1} + t^{2H-1}\right) dt   +   \int_0^T \left(\int_s^T |\phi(t)-\phi(s)| (t-s)^{H-\frac{3}{2}}dt \right)^2 ds \,.
\end{eqnarray*}
By the estimates \eqref{kh.est1} and \eqref{kh.est2}, there exists a constant $C$ depending on $H$ such that for any $\phi \in \mathfrak{H}^{d }$ with  support in $[0,T]$,
\begin{equation}\label{hnorm.est}
  \|\phi\|^2_{\mathfrak{H}^{d }} = \|K_H(\phi)\|^2_{L^2([0,\infty);\mathbb{R}^d)} \leq C \|\phi\|^2_{K^{d}_T} \,.	
\end{equation}

Next, we introduce the derivative operator and its adjoint, the divergence. Consider a smooth and cylindrical random variable of the form $F= f(B_{t_1}, \dots, B_{t_n})$, where $f \in C_b^{\infty}(\mathbb{R}^{d \times n})$ ($f$ and its partial derivatives are all bounded). We define its Malliavin derivative as the $\mathfrak{H}^d$-valued random variable given by $DF = (D^1F, \dots, D^d F)$ whose $j$th component is  given by 
$$
 D^j_s F = \sum_{i=1}^n \frac{\partial f}{\partial x_i^j} (B_{t_1},\dots, B_{t_n})\mathbb{1}_{[0, t_j]}(s).
 $$
By iteration, one can define  higher order derivatives $D^{j_1, \dots, j_i}F$ that take values on $(\mathfrak{H}^d)^{\otimes i}$. For any natural number $p$ and any real number $ q \geq 1$, we define the Sobolev space $\mathbb{D}^{p,q}$  as the closure of the space of smooth and cylindrical random variables with respect to the norm $\|\cdot\|_{p,q}$ given by
$$
\|F\|^q_{p,q} = \mathbb{E}(|F|^q) +  \sum_{i=1}^p \mathbb{E}  \left[ \left(\sum_{j_1, \dots, j_i = 1}^d \|D^{j_1, \ldots, j_i} F\|^2_{(\mathfrak{H}^d)^{\otimes i}}\right)^{\frac{q}{2}} \right].
$$
Similarly, if $\W$ is a general Hilbert space, we can define the Sobolev space of $\W$-valued random variables $\mathbb{D}^{p,q}(\W)$.

For $j=1, \ldots, d$, the adjoint of the Malliavin derivative operator $D^j$, denoted as $\delta^j$, is called the divergence operator or Skorohod integral (see \cite{nu}). A random element $u$ belongs to the domain of $\delta^j$, denoted as $\dom(\delta^j)$, if there exists a positive constant $c_u$ depending only on $u$ such that
$$
|\mathbb{E}( \langle D^j F, u \rangle_{\mathfrak{H}}) | \leq c_u \|F\|_{L^2(\Omega)}
$$ 
for any $F \in \mathbb{D}^{1,2}$.  If $u \in \dom(\delta^j)$, then the random variable $\delta^j(u)$ is defined by the duality relationship
\[
  \mathbb{E} \left(F \delta^j(u)\right) = \mathbb{E}(\langle D^j F, u \rangle_{\mathfrak{H}}) \, ,
\] for any $F \in \mathbb{D}^{1,2}$.  In a similar way, we can define the divergence operator on $\mathfrak{H}^d$ and  we have
 $\delta(u) = \sum_{j=1}^d \delta^j (u_j)$ for $u=(u_1, \dots, u_d) \in \cap_{j=1}^d \dom(\delta^j)$. We make use of the notation $\delta(u)=\int_0^\infty u_t dB_t$ and call $\delta(u)$ the divergence integral of $u$ with respect to the fBm $B$.

For $p>1$, as a consequence of Meyer's inequality, the divergence operator $\delta$ is continuous from $\mathbb{D}^{1,p}(\mathfrak{H}^d)$ into $L^p(\Omega)$, which means
\begin{equation} \label{div.pm}
  \mathbb{E}(|\delta(u)|^p) \leq C_p \left(\mathbb{E}(\| u\|^p_{\mathfrak{H}^d}) + \mathbb{E}(\|Du\|^p_{\mathfrak{H}^{d } \otimes \mathfrak{H}^{d }})\right) \,,
\end{equation}
for some constant $C_p$ depending on $p$.

\section{Moment estimates and maximal inequality for divergence integrals with respect to fBm } \label{maxineq}

When $H>\frac{1}{2}$, thanks to \eqref{lh.norm} and \eqref{div.pm}, the following lemma provides a useful  estimate for the $p$-norm  of the divergence integral with respect to fBm.
\begin{lemma} \label{pnorm.g}
Let $H \in (\frac{1}{2}, 1)$ and let $u$ be an element of $\mathbb{D}^{1,p}(\mathfrak{H}^d)$, $p>1$. Then $u$ belongs to the domain of the divergence operator $\delta$ in $L^p(\Omega)$. Moreover, we have
\[
  \mathbb{E}(|\delta(u)|^p) \leq C_{p,H} \left( \| \mathbb{E}(u) \|^p_{L^{1/H}([0,\infty); \mathbb{R}^d)} + \mathbb{E}\left(\|Du\|^p_{L^{1/H}([0,\infty)^2; \mathbb{R}^{d\times d} }\right) \right) \,.
\]
\end{lemma}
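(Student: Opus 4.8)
The plan is to bootstrap the continuity bound \eqref{div.pm} into the sharper form stated. Membership of $u$ in $\dom(\delta)$ with $\delta(u) \in L^p(\Omega)$ is already contained in \eqref{div.pm}, so the entire task is to replace the two $\mathfrak{H}^d$-norms on its right-hand side by the $L^{1/H}$-norms of the statement. The second term causes no trouble: the continuous embedding $L^{1/H}([0,\infty);\mathbb{R}^d) \subset |\mathfrak{H}|^d \subset \mathfrak{H}^d$ behind \eqref{lh.norm}, tensorized over the two time variables, gives $\mathbb{E}(\|Du\|_{\mathfrak{H}^d\otimes\mathfrak{H}^d}^p) \le C\, \mathbb{E}(\|Du\|_{L^{1/H}([0,\infty)^2;\mathbb{R}^{d\times d})}^p)$. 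The genuine discrepancy is in the first term, since \eqref{div.pm} only offers $\mathbb{E}(\|u\|_{\mathfrak{H}^d}^p)$, whereas the claim asks for the smaller quantity $\|\mathbb{E}(u)\|_{L^{1/H}}^p$. To close this gap I would decompose $u = \mathbb{E}(u) + \tilde u$, where $\tilde u := u - \mathbb{E}(u)$ is centered, and estimate $\delta(\mathbb{E}(u))$ and $\delta(\tilde u)$ separately, recombining through Minkowski's inequality $\|\delta(u)\|_p \le \|\delta(\mathbb{E}(u))\|_p + \|\delta(\tilde u)\|_p$.

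For the deterministic part, $\mathbb{E}(u) \in \mathfrak{H}^d$ is non-random, so $\delta(\mathbb{E}(u)) = B(\mathbb{E}(u))$ lives in the first Wiener chaos and is a centered Gaussian variable with variance $\|\mathbb{E}(u)\|_{\mathfrak{H}^d}^2$. Hence $\mathbb{E}(|\delta(\mathbb{E}(u))|^p) = c_p \|\mathbb{E}(u)\|_{\mathfrak{H}^d}^p$, and one more application of \eqref{lh.norm} bounds this by $c_p\, b_{H,d}^p\, \|\mathbb{E}(u)\|_{L^{1/H}}^p$, which is exactly the first term of the lemma.

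For the centered part I would feed $\tilde u$ into \eqref{div.pm}, observing that $D\tilde u = Du$ because $\mathbb{E}(u)$ is deterministic, to obtain $\mathbb{E}(|\delta(\tilde u)|^p) \le C_p(\mathbb{E}(\|\tilde u\|_{\mathfrak{H}^d}^p) + \mathbb{E}(\|Du\|_{\mathfrak{H}^d\otimes\mathfrak{H}^d}^p))$; the derivative term is again converted to an $L^{1/H}$-norm by the tensorized embedding. The hard part will be the remaining term $\mathbb{E}(\|\tilde u\|_{\mathfrak{H}^d}^p)$: because $\tilde u$ is centered, it is controlled by an $L^p$ Poincar\'e-type inequality for $\mathfrak{H}^d$-valued functionals, namely $\mathbb{E}(\|\tilde u\|_{\mathfrak{H}^d}^p) \le C_p\, \mathbb{E}(\|D\tilde u\|_{\mathfrak{H}^d\otimes\mathfrak{H}^d}^p)$, which rests on Meyer's inequalities and is transparent on $L^2$ through the chaos expansion, where differentiation multiplies the $n$-th chaos by a factor $n \ge 1$. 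Applying it absorbs $\|\tilde u\|_{\mathfrak{H}^d}$ into the derivative term, and a final use of the tensorized \eqref{lh.norm} leaves only $\mathbb{E}(\|Du\|_{L^{1/H}}^p)$. Collecting the two estimates and raising the resulting bound on $\|\delta(u)\|_p$ to the $p$-th power via $(a+b)^p \le 2^{p-1}(a^p+b^p)$ yields the stated inequality, with $C_{p,H}$ absorbing the constants from the Gaussian moment, Meyer's inequalities, and $b_{H,d}$.
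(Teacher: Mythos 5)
Your proposal is correct, but it is worth saying how it relates to the paper: the paper offers no proof of this lemma at all, merely asserting that it follows ``thanks to \eqref{lh.norm} and \eqref{div.pm}'' (i.e.\ it is quoted as a known result from \cite{nu}). In fact, you correctly spotted that those two ingredients alone cannot give the statement as written: \eqref{div.pm} has $\mathbb{E}(\|u\|^p_{\mathfrak{H}^d})$ in the first term, and combining it with the embedding only yields the weaker bound with $\mathbb{E}(\|u\|^p_{L^{1/H}})$, whereas the lemma claims the strictly smaller quantity $\|\mathbb{E}(u)\|^p_{L^{1/H}}$ (by Jensen). Your decomposition $u = \mathbb{E}(u) + \tilde u$, the exact Gaussian moment computation for $\delta(\mathbb{E}(u)) = B(\mathbb{E}(u))$, and the absorption of $\mathbb{E}(\|\tilde u\|^p_{\mathfrak{H}^d})$ into the derivative term via the $L^p$ Poincar\'e inequality for centered Hilbert-space-valued functionals is precisely the standard argument behind the sharper form of Meyer's inequality, $\mathbb{E}(|\delta(u)|^p) \leq C_p \left( \|\mathbb{E}(u)\|^p_{\mathfrak{H}^d} + \mathbb{E}(\|Du\|^p_{\mathfrak{H}^d \otimes \mathfrak{H}^d}) \right)$, which is what the lemma implicitly relies on (Proposition 1.5.8 in \cite{nu}); the two-variable version of \eqref{lh.norm} that you invoke for the $Du$ term is likewise standard (two applications of the Hardy--Littlewood inequality). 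The one step you should not treat as routine is the Poincar\'e inequality itself: your remark that it is ``transparent on $L^2$ through the chaos expansion'' only justifies the case $p=2$, and for general $p>1$ it genuinely requires Meyer's inequalities together with, e.g., the exponential $L^p$-decay of the Ornstein--Uhlenbeck semigroup on centered random variables (or a citation to \cite{nu}). Since you flag its provenance correctly, this is a matter of citation rather than a gap, and your write-up actually supplies the justification that the paper omits.
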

Now we consider the case of $H \in (0,\frac{1}{2})$. First we will derive an estimate for the $p$-norm of  $\| u \mathbb{1}_{[a,b]}\|_{\mathfrak{H}\otimes \W}$, where $u$ is a stochastic process with values in a Hilbert space $\W$.

Consider the functions $L^t$ and $L^{t,s}$ defined  for $0< s<t <b$ by
\begin{equation} \label{lt}
	L^t(\lambda_0, \lambda_1) :=   (b-t)^{\lambda_0} t^{\lambda_1} \,,
\end{equation}
\begin{equation} \label{lts}
	L^{t,s}(\lambda_2, \lambda_3, \lambda_4) :=  (b-t)^{\lambda_2} (t-s)^{\lambda_3}  s^{\lambda_4} \,.
\end{equation}
 where the $\lambda_i$'s are parameters. We  denote by $C$ a generic constant that depends only on the coefficients of the SDE \eqref{nl.sde}, the Hurst parameter $H$ and the  parameters introduced along the paper. 

\begin{pro} \label{u.pmom}
Let $p \geq 2$ and  $H \in (0,\frac{1}{2})$. Fix $b\geq 0$. Let $\W$ be a Hilbert space and  consider a $\W$-valued stochastic   process $u=\{u_t, t \ge 0\}$ satisfying the following conditions:
	\begin{itemize}
		\item [(i)] $\|u_t\|_{L^p(\Omega; \W )} \leq  K_1 L^t(\lambda_0, \lambda_1)$, for all $t \ge 0$; 
		\item [(ii)] $\|u_t - u_s\|_{L^p(\Omega;  \W )} \leq  K_2 L^{t,s}(\lambda_2, \lambda_3, \lambda_4)$, for all $s < t\le b$,
	\end{itemize}
where the parameters $\lambda_i$   satisfy $\lambda_0  >-H$, $\lambda_1  ,\lambda_4 \ge 0$, $\lambda_2  >- \frac 12$, and $\lambda_3 > \frac{1}{2}-H$. Then for all $0 \leq a \leq b$,
   \begin{eqnarray}
     \mathbb{E} (\|u\mathbb{1}_{[a,b]}\|^p_{\mathfrak{H} \otimes \W  }) & \leq &  CK_2^p  b^{p\lambda_4} (b-a)^{pH + p\lambda_2 + p\lambda_3} + \ CK_1 ^p  b^{p\lambda_1}(b-a)^{pH+p\lambda_0 } \,.   \label{u.pnorm} 
   \end{eqnarray} 
\end{pro}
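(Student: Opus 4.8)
The plan is to control $\|u\mathbb{1}_{[a,b]}\|_{\mathfrak{H}\otimes\W}$ through the majorizing norm of \eqref{hnorm.est}, but applied on an interval that starts at $a$ rather than at $0$. The naive route---applying \eqref{hnorm.est} with $T=b$ to the function $u\mathbb{1}_{[a,b]}$, which is supported in $[0,b]$---produces the weight $t^{2H-1}$ and an extra double integral over $s\in[0,a]$ coming from the jump of $\mathbb{1}_{[a,b]}$ at $a$; these boundary contributions scale like powers of $a$, not of $b-a$, and would destroy the claimed $(b-a)$-scaling. (One sees this already for the constant process $u\equiv 1$, where $\|\mathbb{1}_{[a,b]}\|_{\mathfrak{H}}^2=(b-a)^{2H}$ exactly, while the $K_b$-norm bound would be of order $a^{2H}$.) The way around this is to exploit the stationarity of the increments of fractional Brownian motion: the $\mathfrak{H}$-inner product of interval indicators equals $\langle\mathbb{1}_{[c,d]},\mathbb{1}_{[c',d']}\rangle_{\mathfrak{H}}=\mathbb{E}[(B_d-B_c)(B_{d'}-B_{c'})]$, which is invariant under a common shift; hence for any $\phi\in\mathfrak{H}$ supported in $[a,\infty)$ one has $\|\phi\|_{\mathfrak{H}}=\|\phi(\cdot+a)\|_{\mathfrak{H}}$. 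So, writing $v_r=u_{r+a}$, first I would reduce to $\|u\mathbb{1}_{[a,b]}\|_{\mathfrak{H}\otimes\W}=\|v\mathbb{1}_{[0,b-a]}\|_{\mathfrak{H}\otimes\W}$ and apply \eqref{hnorm.est} with $T=b-a$.

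After this reduction and changing variables back, \eqref{hnorm.est} bounds $\|u\mathbb{1}_{[a,b]}\|^2_{\mathfrak{H}\otimes\W}$ by $C(\mathrm{I}+\mathrm{II})$, where
\[
\mathrm{I}=\int_a^b \|u_t\|_\W^2\big((b-t)^{2H-1}+(t-a)^{2H-1}\big)\,dt,\qquad
\mathrm{II}=\int_a^b\Big(\int_s^b\|u_t-u_s\|_\W\,(t-s)^{H-\frac32}\,dt\Big)^2 ds.
\]
The shift has turned the problematic $t^{2H-1}$ into $(t-a)^{2H-1}$ and, crucially, removed the double integral over $[0,a]$. Then, since $p\ge2$, convexity gives $\mathbb{E}(\|u\mathbb{1}_{[a,b]}\|^p_{\mathfrak{H}\otimes\W})\le C\,\mathbb{E}[(\mathrm{I}+\mathrm{II})^{p/2}]\le C\big(\mathbb{E}[\mathrm{I}^{p/2}]+\mathbb{E}[\mathrm{II}^{p/2}]\big)$, and I would estimate the two expectations separately, each producing exactly one of the two summands of \eqref{u.pnorm}.

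For each term I would use Minkowski's integral inequality to move the $L^{p/2}(\Omega)$-norm inside the $ds$- and $dt$-integrals, thereby reducing the probabilistic estimate to the pointwise bounds (i) and (ii). For $\mathrm{I}$ this gives $\|\mathrm{I}\|_{L^{p/2}(\Omega)}\le K_1^2\int_a^b (b-t)^{2\lambda_0}t^{2\lambda_1}\big((b-t)^{2H-1}+(t-a)^{2H-1}\big)\,dt$; bounding $t^{2\lambda_1}\le b^{2\lambda_1}$ (here $\lambda_1\ge0$ is used) and evaluating the two resulting integrals as Beta integrals yields $C K_1^2\,b^{2\lambda_1}(b-a)^{2H+2\lambda_0}$, where integrability at the endpoint $t=b$ requires $2\lambda_0+2H>0$ and $2\lambda_0>-1$, both ensured by $\lambda_0>-H>-\tfrac12$. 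For $\mathrm{II}$, applying Minkowski twice and then (ii) reduces the inner $dt$-integral to $\int_s^b(b-t)^{\lambda_2}(t-s)^{\lambda_3+H-\frac32}\,dt$, a Beta integral equal to a constant times $(b-s)^{\lambda_2+\lambda_3+H-\frac12}$; here the conditions $\lambda_2>-\tfrac12$ and $\lambda_3>\tfrac12-H$ are precisely what secure integrability at $t=b$ and at $t=s$. Squaring, using $s^{2\lambda_4}\le b^{2\lambda_4}$ ($\lambda_4\ge0$), and integrating the remaining $(b-s)^{2\lambda_2+2\lambda_3+2H-1}$ over $[a,b]$ (convergent since those same conditions force the exponent $>-1$) produces $CK_2^2\,b^{2\lambda_4}(b-a)^{2H+2\lambda_2+2\lambda_3}$. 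Raising both bounds to the power $p/2$ gives the two terms of \eqref{u.pnorm}.

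I expect the conceptual crux to be the first paragraph: recognizing that a direct use of \eqref{hnorm.est} on $[0,b]$ over-counts the boundary behaviour at $0$ and spoils the scaling, and that the stationarity of the fBm increments lets one localize the support to $[0,b-a]$. Once this is done, the remaining work is the routine but parameter-sensitive bookkeeping of Beta-type integrals, in which the precise ranges $\lambda_0>-H$, $\lambda_1,\lambda_4\ge0$, $\lambda_2>-\tfrac12$, $\lambda_3>\tfrac12-H$ are each consumed to secure integrability and to collect the correct powers of $b$ and $b-a$.
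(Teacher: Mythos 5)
Your estimates are correct and your route does yield \eqref{u.pnorm}, but your framing paragraph contains a false claim about the ``naive'' route, and that route is in fact exactly what the paper does. The paper applies \eqref{kstar} with $T=b$ to $u\mathbb{1}_{[a,b]}$ and splits into three terms: $I_1$ (the $K_H(b,s)u_s$ term on $[a,b]$), $I_2$ (the increment term on $[a,b]$), and the boundary term $I_3=\bigl(\int_a^b u_t \frac{\partial K_H}{\partial t}(t,s)\,dt\bigr)\mathbb{1}_{[0,a]}(s)$. Neither of the two features you flag is fatal. The weight $t^{2H-1}$ is harmless because $2H-1<0$ gives $s^{2H-1}\le (s-a)^{2H-1}$ on $[a,b]$, which reproduces precisely your term $\mathrm{I}$. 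And the boundary term does \emph{not} scale like $a^{2H}$: writing $A_3$ via Fubini as a double integral in $(t_1,t_2)$ over $[a,b]^2$, the paper bounds $\int_0^a (t_1-s)^{H-\frac32}(t_2-s)^{H-\frac32}\,ds$ by both $C(t_1-a)^{H-\frac32}(t_2-a)^{H-\frac12}$ and the symmetric expression, and takes the geometric mean to get $C(t_1-a)^{H-1}(t_2-a)^{H-1}$, which yields $A_3\le CK_1^p b^{p\lambda_1}(b-a)^{pH+p\lambda_0}$. In particular your $u\equiv 1$ ``counterexample'' is wrong: $\|\mathbb{1}_{[a,b]}\|^2_{K_b}$ is of order $(b-a)^{2H}$, not $a^{2H}$; the $a^{2H}$ only appears if one crudely bounds $\int_a^b(t-s)^{H-\frac32}\,dt$ by $(a-s)^{H-\frac12}/(\tfrac12-H)$ and discards the cancellation with $(b-s)^{H-\frac12}$.

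That said, your actual argument is a valid and genuinely different (arguably cleaner) alternative. Using stationarity of increments to shift the support to $[0,b-a]$ kills the boundary term entirely, so no geometric-mean trick is needed; the shift invariance of the $\mathfrak{H}$-norm is exact on step functions supported in $[a,\infty)$ and passes to limits. To make that step fully rigorous you should note that $u\mathbb{1}_{[a,b]}$ is the $\mathfrak{H}\otimes\W$-limit of step approximations supported in $[a,b]$ (their differences are controlled, after shifting, by the $K_{b-a}$-norm via \eqref{hnorm.est}, using your hypotheses (i)--(ii)), so the isometry carries over to $u\mathbb{1}_{[a,b]}$ itself; this lemma is not in the paper and is the price of your approach. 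After the shift, your terms $\mathrm{I}$ and $\mathrm{II}$, the Minkowski steps, the Beta integrals, and the way each parameter condition ($\lambda_0>-H$, $\lambda_2>-\tfrac12$, $\lambda_3>\tfrac12-H$, $\lambda_1,\lambda_4\ge0$) is consumed coincide exactly with the paper's estimates of its terms $A_1$ and $A_2$. So the comparison is: the paper pays with one extra term ($A_3$) and a clever pointwise estimate; you pay with a shift-invariance/density lemma. Both are correct.
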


\begin{proof}
To simplify we assume $\W=\R$.
 Using the isometry   of the operator $K_{H}$, we  can write
 \[
   \mathbb{E} (\|u\mathbb{1}_{[a,b]}\|^p_{\mathfrak{H} }) =  \mathbb{E} \left(\|K_{H}(u\mathbb{1}_{[a,b]})\|^p_{L^2([0,b])}\right) \,.
 \]
We decompose the integral appearing in \eqref{kstar} into sum of
three  terms 
according to the cases where   one of $s, t$ is in the interval $(a,b)$ or both.  In this way, we obtain
 \begin{eqnarray*}
   K_{H}(u\mathbb{1}_{[a,b]}) & = & K_H(b,s) u_s \mathbb{1}_{[a,b]} (s) + \left( \int_s^b (u_t - u_s) \frac{\partial K_H}{\partial t} (t,s) dt \right) \mathbb{1}_{[a,b]} (s) \\
   & &  + \left(\int_a^b u_t \frac{\partial K_H}{\partial t} (t,s) dt\right) \mathbb{1}_{[0,a]} (s)  \\
   &=:& I_1 + I_2 + I_3 \,.
 \end{eqnarray*}
Thus,
   \begin{equation}\label{est}
     \mathbb{E} (\|u\mathbb{1}_{[a,b]}\|^p_{\mathfrak{H} }) \leq C \sum_{i=1}^3 A_i \,,
   \end{equation}
where $A_i=\mathbb{E}\left(\|I_i\|^p_{L^2([0,b])}\right)$. Now we estimate each term  $A_i$ in \eqref{est}. For $A_1$, applying Minkowski inequality and condition $(i)$,
we obtain  
\begin{eqnarray*}
	A_1 & \leq & C \left(\int_a^b \left((b-s)^{2H-1} + s^{2H-1}\right)\|u_s\|^2_{L^p(\Omega)} ds\right)^{\frac{p}{2}} \\
	& \leq & CK_1^p \left(\int_a^b ((b-s)^{2H-1} + s^{2H-1})(b-s)^{2\lambda_0}  s^{2\lambda_1} ds\right)^{\frac{p}{2}} \\
	& \leq & CK_1^p \left(\int_a^b \left((b-s)^{2H-1} + (s-a)^{2H-1}\right) (b-s)^{2\lambda_0} s^{2\lambda_1} ds\right)^{\frac{p}{2}} \\
	& = & CK_1  ^p b^{p\lambda_1} (b-a)^{pH+p\lambda_0} \,.
\end{eqnarray*}
For the term $A_3$, applying again Minkowski inequality and condition $(i)$, we can write
\begin{eqnarray*}
	A_3 & \leq & C \left(\int_0^a \left(\int_a^b \|u_t\|_{L^p(\Omega)} (t-s)^{H-\frac{3}{2}}dt\right)^2 ds\right)^{\frac{p}{2}} \\
	& \leq & CK_1 ^p \left(\int_0^a \left(\int_a^b (b-t)^{\lambda_0} t ^{\lambda_1} (t-s)^{H-\frac{3}{2}} dt\right)^2 ds\right)^{\frac{p}{2}}\,.
\end{eqnarray*}
Denote  $g(t)=(b-t)^{\lambda_0} t^{\lambda_1}$ which is positive. Then
\[
  A_3  \leq  C K_1 ^p   \left(\int_{[a, b]^2} g(t_1) g(t_2) dt_1 dt_2\int_0^a (t_1-s)^{H-\frac32}(t_2-s)^{H-\frac32}ds\right)^{\frac{p}{2}} \,.
\]
Now 
\begin{eqnarray*} 
\int_0^a (t_1-s)^{H-\frac32}(t_2-s)^{H-\frac32}ds
\le \int_0^a (t_1-a)^{H-\frac32}(t_2-s)^{H-\frac32}ds
\le C (t_1-a)^{H-\frac32} (t_2-a)^{H-\frac{1}{2}}
\,.
\end{eqnarray*}
In the same way we have 
\begin{eqnarray*} 
\int_0^a (t_1-s)^{H-\frac32}(t_2-s)^{H-\frac32}ds
\le C (t_2-a)^{H-\frac32} (t_1-a)^{H-\frac{1}{2}}
\,.
\end{eqnarray*}
Using the fact that if $u\le a_1$ and $u\le a_2$,  then $u\le \sqrt{a_1a_2}$, we see that
\[
\int_0^a (t_1-s)^{H-\frac32}(t_2-s)^{H-\frac32}ds
\le (t_1-a)^{H-1} (t_2-a)^{H-1}\,.
\]
Therefore, we have
\begin{eqnarray*}
	A_3
	& \leq & CK_1^p     \left(\int_a^b (b-t)^{\lambda_0} (t-a)^{ H - 1  } t^{\lambda_1} dt   \right)^{p} 
	\leq CK_1 ^p b^{p\lambda_1} (b-a)^{pH+p\lambda_0   }   \,.
\end{eqnarray*}
 For $A_2$, applying Minkowski inequality and condition $(ii)$, yields
\begin{eqnarray*}
    A_2 & \leq & C \left(\int_a^b \left(\int_s^b \|u_t-u_s\|_{L^p(\Omega)} (t-s)^{H-\frac{3}{2}} dt\right)^2 ds\right)^{\frac{p}{2}} \\
	& \leq & CK_2 ^p  \left(\int_a^b \left(\int_s^b (b-t)^{\lambda_2} (t-s)^{\lambda_3}  s^{\lambda_4} (t-s)^{H-\frac{3}{2}} dt\right)^2 ds\right)^{\frac{p}{2}} \\
	& \leq & CK_2^p   \left(\int_a^b (b-s)^{2\lambda_2 + 2\lambda_3 + 2H - 1} s^{2\lambda_4} ds\right)^{\frac{p}{2}}\\
	& = & CK_2^p b^{p\lambda_4} (b-a)^{pH + p\lambda_2 + p\lambda_3} \,.
\end{eqnarray*}
This completes the proof.
\end{proof}

Suppose now that  $u$ is a $d$-dimensional stochastic process.
We will make use of the notation $\|u\|_{p,a,b} := \sup_{a \leq t \leq b} \|u_t\|_{L^p(\Omega; \mathbb{R}^d)}$. Consider the following regularity conditions on $u$:  

\begin{hypo}\label{hypo.u} Assume that there are constants 
 $K > 0$,   $\beta > \frac{1}{2} - H$ and $\lambda \in (0,H]  $, such that
the $\mathbb{R}^d$-valued process $u=\{u_t , t \ge 0 \}$ and its derivative $\{Du_t, t  \ge 0\}$ 
satisfy the following conditions:
	\begin{itemize}
		\item [(i)] $\|u\|_{p,0,\infty}= \sup_{t \geq 0} \|u_t\|_{L^p(\Omega; \mathbb{R}^d)} < \infty$,
		\item [(ii)] $\|u_t - u_s\|_{L^p(\Omega; \mathbb{R}^d)} \leq K (t-s)^{\beta}$,
		\item [(iii)] $\|D u_t\|_{L^p(\Omega;   \HH^d \otimes \R^d)} \leq K  t^\lambda$,
		\item [(iv)] $\|Du_t - Du_s\|_{L^p(\Omega; \HH^d \otimes \R^d)} \leq K (t-s)^{\beta}  s^\lambda$,
	\end{itemize}
for all $0 \leq  s < t $.	 
\end{hypo}

  As an application of \eqref{div.pm} and Proposition \ref{u.pmom}, 
we give the following estimate for the $p$-th moment of the divergence integral $\delta(u\mathbb{1}_{[0,T]})$.
\begin{prop} \label{div.pmom}
Let $H \in (0,\frac{1}{2})$ and $p \geq 2$.  Assume that the $\mathbb{R}^d$-valued stochastic process $\{u_t , t \ge 0\}$ satisfies Hypothesis \ref{hypo.u}. Then for any $T>0$,  the divergence integral $\delta(u\mathbb{1}_{[0,T]})$ is in $L^p(\Omega)$, and 
  \[
    \mathbb{E}( |\delta(u\mathbb{1}_{[0,T]})|^p) \leq C T^{pH} (1+ T^{p\lambda} ) (1+ T^{p \beta} ) \,,
  \]
where the constant $C$   is independent of $T$.
\end{prop}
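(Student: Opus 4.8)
The plan is to bound the $p$-th moment of the divergence integral by Meyer's inequality \eqref{div.pm} and then control each of the two resulting terms with Proposition \ref{u.pmom}. Since $\mathbb{1}_{[0,T]}$ is deterministic we have $D(u\mathbb{1}_{[0,T]}) = (Du)\mathbb{1}_{[0,T]}$, so \eqref{div.pm} gives
\[
\mathbb{E}(|\delta(u\mathbb{1}_{[0,T]})|^p) \leq C_p\left(\mathbb{E}(\|u\mathbb{1}_{[0,T]}\|^p_{\mathfrak{H}^d}) + \mathbb{E}(\|(Du)\mathbb{1}_{[0,T]}\|^p_{\mathfrak{H}^d \otimes \mathfrak{H}^d})\right).
\]
It then suffices to estimate the two norms on the right; their finiteness simultaneously justifies that $u\mathbb{1}_{[0,T]} \in \mathbb{D}^{1,p}(\mathfrak{H}^d)$, hence lies in $\dom(\delta)$ with $\delta(u\mathbb{1}_{[0,T]}) \in L^p(\Omega)$.

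For the first term I would apply Proposition \ref{u.pmom} with $\W = \mathbb{R}^d$, $a=0$ and $b=T$, using the identification $\mathfrak{H}^d = \mathfrak{H} \otimes \mathbb{R}^d$. Conditions (i)--(ii) of Hypothesis \ref{hypo.u} supply exactly the two inputs required: (i) gives $\|u_t\|_{L^p(\Omega;\mathbb{R}^d)} \leq \|u\|_{p,0,\infty}$, i.e. the choice $K_1 = \|u\|_{p,0,\infty}$, $\lambda_0 = \lambda_1 = 0$; (ii) gives $\|u_t - u_s\|_{L^p(\Omega;\mathbb{R}^d)} \leq K(t-s)^\beta$, i.e. $K_2 = K$, $\lambda_2 = \lambda_4 = 0$, $\lambda_3 = \beta$. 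The admissibility constraints $\lambda_0 > -H$, $\lambda_1,\lambda_4 \geq 0$, $\lambda_2 > -\tfrac12$, $\lambda_3 > \tfrac12 - H$ all hold because $\beta > \tfrac12 - H$ by hypothesis. Proposition \ref{u.pmom} then yields, after collecting the process constants into $C$, the bound $\mathbb{E}(\|u\mathbb{1}_{[0,T]}\|^p_{\mathfrak{H}^d}) \leq C T^{pH}(1 + T^{p\beta})$.

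For the second term I would regard $\{Du_t\}_{t\ge 0}$ as a process with values in the Hilbert space $\W = \mathfrak{H}^d \otimes \mathbb{R}^d$ and invoke Proposition \ref{u.pmom} again with $a=0$, $b=T$, using that $\mathfrak{H}\otimes\W = \mathfrak{H}\otimes\mathfrak{H}^d\otimes\mathbb{R}^d$ is isometric to $\mathfrak{H}^d \otimes \mathfrak{H}^d = (\mathfrak{H}\otimes\mathbb{R}^d)\otimes(\mathfrak{H}\otimes\mathbb{R}^d)$ after reordering tensor factors. Here conditions (iii)--(iv) of Hypothesis \ref{hypo.u} provide the inputs: (iii) gives $\|Du_t\|_{L^p} \leq K t^\lambda$, so $K_1 = K$, $\lambda_0 = 0$, $\lambda_1 = \lambda$; (iv) gives $\|Du_t - Du_s\|_{L^p} \leq K(t-s)^\beta s^\lambda$, so $K_2 = K$, $\lambda_2 = 0$, $\lambda_3 = \beta$, $\lambda_4 = \lambda$. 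Since $\lambda \in (0,H]$ we have $\lambda \geq 0$, and all admissibility constraints hold as before. Proposition \ref{u.pmom} gives $\mathbb{E}(\|(Du)\mathbb{1}_{[0,T]}\|^p_{\mathfrak{H}^d\otimes\mathfrak{H}^d}) \leq C T^{pH + p\lambda}(1 + T^{p\beta})$. Adding the two estimates and factoring produces $C T^{pH}(1+T^{p\lambda})(1+T^{p\beta})$, the claimed bound; the constant is $T$-independent since the constant in Proposition \ref{u.pmom} depends only on $H$ and the now-fixed exponents $\lambda_i$.

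The parameter bookkeeping is routine. The one point that I expect to require genuine care — rather than any estimate — is the tensor-factor identification in the second term: I must check that treating $Du$ as a $(\mathfrak{H}^d\otimes\mathbb{R}^d)$-valued process and applying Proposition \ref{u.pmom} (whose target space factor is a single copy of $\mathfrak{H}$) correctly reproduces the mixed norm $\|\cdot\|_{\mathfrak{H}^d \otimes \mathfrak{H}^d}$ appearing in Meyer's inequality \eqref{div.pm}. Getting this identification right is the crux; the rest follows by the two direct applications of Proposition \ref{u.pmom} above.
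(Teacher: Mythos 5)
Your proposal is correct and follows essentially the same route as the paper's proof: Meyer's inequality \eqref{div.pm} followed by two applications of Proposition \ref{u.pmom}, first with $\W=\mathbb{R}^d$ and parameters $\lambda_3=\beta$, $\lambda_i=0$ otherwise, then with $\W=\mathfrak{H}^d\otimes\mathbb{R}^d$ and $\lambda_1=\lambda_4=\lambda$, $\lambda_3=\beta$, using the isomorphism $\mathfrak{H}\otimes(\mathfrak{H}^d\otimes\mathbb{R}^d)\cong\mathfrak{H}^d\otimes\mathfrak{H}^d$ that the paper also invokes. The tensor-factor identification you flag as the crux is exactly the point the paper handles the same way, so there is nothing to add.
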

\begin{proof}
	We will use  inequality \eqref{div.pm} to prove the proposition and it suffices to compute the right-hand side of \eqref{div.pm}.   Applying  Proposition \ref{u.pmom} to  $\W =\mathbb{R}^d$, $\lambda_3 = \beta$ and $\lambda_i=0, i \neq 3$, we obtain
	\[
	\mathbb{E}( \|u\mathbb{1}_{[0,T]}\|^p_{\mathfrak{H}^d}) \leq C\left(\|u\|^p_{p,0,\infty} T^{pH} + K ^pT^{p\beta + pH} \right) \,.
	\]
 
To compute the $p$-th moment of the derivative of $u$, we use the functions $L^t$ and $L^{t,s}$  introduced in (\ref{lt}) and (\ref{lts}), respectively, to write the conditions (iii) and (iv) of Hypothesis \ref{hypo.u} as
\[
  \|D u_t\|_{L^p(\Omega; \HH^d \otimes  \R^d)} \leq K L^t(0, \lambda) \,,
\]
and
\[
  \|Du_t - Du_s\|_{L^p(\Omega; \HH^d \otimes \R^d)} \leq K L^{t,s}(0, \beta,  \lambda) \,.
\]
Then we use Proposition \ref{u.pmom} for $\W =\HH ^d \otimes \R^d$ and take into account the isomorphism $\mathfrak{H} \otimes (\mathfrak{H}^d \otimes \mathbb{R}^d) \cong \mathfrak{H}^d \otimes \mathfrak{H}^d$ to obtain
\begin{equation*}
\mathbb{E} (\|Du\mathbb{1}_{[0,T]}\|^p_{\HH ^d \otimes \HH^d})  \leq C K^p  T^{pH+p\lambda} (1+ T^{p \beta} )  \,.
\end{equation*}
This completes the proof of the proposition. 
\end{proof}
When $H \neq \frac{1}{2}$, the divergence integral $\left\{ \int_0^t u_s dB_s\,,  t\ge 0\right\}$ is not a martingale, so we cannot apply Burkholder inequality to bound the maximum of the integral. However,
if the process $u$ satisfies some regularity conditions in Hypothesis \ref{hypo.u}, we can use a factorization method to estimate the maximum, as it has been done in
\cite{AN}. This result is given in the following theorem.

\begin{thm} \label{div.maxineq}  
 Let $\{u_t, t  \ge 0 \}$ be an $\mathbb{R}^d$-valued stochastic process. For the divergence integral $\int_0^t u_s dB_s$, $t  \ge 0$, we have the following statements:
\begin{enumerate}
\item Let $H \in (\frac{1}{4},\frac{1}{2})$ and $p> \frac{1}{H}$. Assume that the stochastic process $u$ satisfies Hypothesis \ref{hypo.u}. Then the divergence integral $\int_0^t u_s dB_s$ is in $L^p(\Omega)$ for all $t\ge 0$ and for any $0\le a<b$ we have the estimate
	\[
	 \mathbb{E}\left(\sup_{t \in [a,b]} \left| \int_a^t u_s dB_s \right|^p\right) \leq C (b-a)^{pH} (1+(b-a)^{p\beta}  ) (1+ b^{p\lambda}) \,,
	\]
 where $C$  is a generic constant that does not depend on $a,b$.
\item Let $H \in (\frac{1}{2}, 1)$ and $\frac{1}{p} + \frac{1}{q} = H $  with $p>q$. Suppose that  for all $T>0$
 \begin{itemize}
 \item[(i)] $\int_0^T \mathbb{E}(|u_s|^p )ds < \infty$,
 \item[(ii)] $\int_0^T \int_0^s \mathbb{E}(|D_t u_s|^p )dt ds < \infty$.
 \end{itemize}
Then the divergence integral $\int_0^t u_s dB_s$ is in $L^p(\Omega)$ for all $t\ge 0$ and for any interval $[a,b]$, we have
\begin{eqnarray*}
 & & \mathbb{E}\left(\sup_{t \in [a,b]} \left| \int_a^t u_s dB_s \right|^p\right) \leq C \left( (b-a)^{\frac{p}{q}} \int_{a}^b \mathbb{E}(|u_s|^p )ds +  (b-a)^{\frac{2p}{q}} \int_{a}^b \int_a^s \mathbb{E}(|D_t u_s|^p )dt ds \right) \,,
\end{eqnarray*}
where the constant $C$ does not depend on $a, b$.
\end{enumerate}
\end{thm}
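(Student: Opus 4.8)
The plan is to reduce both statements to the moment estimates already established, via the factorization (stochastic Fubini) method of \cite{AN}. Fix $\alpha\in(0,1)$ and recall the Beta identity $\int_r^t (t-s)^{\alpha-1}(s-r)^{-\alpha}\,ds=c_\alpha$ for $r<t$, with $c_\alpha=\pi/\sin(\pi\alpha)$. Writing $Y_t=\int_a^t u_r\,dB_r$ and inserting this identity, I would interchange the deterministic kernel integral with the divergence integral to obtain the representation
\begin{equation*}
 Y_t = c_\alpha^{-1}\int_a^t (t-s)^{\alpha-1} Z_s\,ds, \qquad Z_s := \int_a^s (s-r)^{-\alpha} u_r\,dB_r .
\end{equation*}
The interchange is a stochastic Fubini theorem for Skorohod integrals; I would justify it by duality against smooth cylindrical functionals (or simply invoke \cite{AN}), after checking that the weighted integrand $u^s_r:=(s-r)^{-\alpha}u_r\mathbb{1}_{[a,s]}(r)$ lies in $\mathbb{D}^{1,p}(\mathfrak{H}^d)$, which is part of the estimate below. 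The exponent $\alpha$ will be chosen in $(1/p,H)$ in case (1) and in $(1/p,1/q)$ in case (2); the hypotheses $p>1/H$, respectively $p>q$ with $1/p+1/q=H$, are exactly what make these intervals nonempty.

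Granting the representation, the supremum is handled by a deterministic H\"older inequality in the time variable. Indeed, for $t\in[a,b]$,
\begin{equation*}
 |Y_t| \le c_\alpha^{-1}\Big(\int_a^t (t-s)^{(\alpha-1)\frac{p}{p-1}}ds\Big)^{\frac{p-1}{p}}\Big(\int_a^b |Z_s|^p ds\Big)^{\frac1p},
\end{equation*}
where the first factor is finite precisely because $\alpha>1/p$, and equals a constant times $(b-a)^{\alpha-1/p}$. Taking the supremum over $t$, raising to the power $p$ and applying Fubini gives
\begin{equation*}
 \mathbb{E}\Big(\sup_{t\in[a,b]}|Y_t|^p\Big) \le C (b-a)^{p\alpha-1}\int_a^b \mathbb{E}(|Z_s|^p)\,ds .
\end{equation*}
It then remains to estimate $\mathbb{E}(|Z_s|^p)$ for the singularly weighted divergence integral $Z_s=\delta(u^s)$ and to integrate in $s$.

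For case (1) I would feed $u^s$ into \eqref{div.pm} and bound the two resulting norms with Proposition \ref{u.pmom}, exactly as in the proof of Proposition \ref{div.pmom} but now carrying the weight $(s-r)^{-\alpha}$. Concretely, $\|u^s_r\|_{L^p}\le K(s-r)^{-\alpha}$ contributes the exponents $\lambda_0=-\alpha$, $\lambda_1=0$, while for the increments I would split $u^s_\tau-u^s_\rho$ into $(s-\tau)^{-\alpha}(u_\tau-u_\rho)$ and $((s-\tau)^{-\alpha}-(s-\rho)^{-\alpha})u_\rho$, estimating the second factor by the interpolation bound $|(s-\tau)^{-\alpha}-(s-\rho)^{-\alpha}|\le C(\tau-\rho)^{\beta'}(s-\tau)^{-\alpha-\beta'}$; the same splitting applied to $Du^s_r=(s-r)^{-\alpha}Du_r$ handles the $\mathfrak{H}^d\otimes\mathfrak{H}^d$ norm using Hypothesis \ref{hypo.u}(iii)--(iv). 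The outcome is a bound of the form $\mathbb{E}(|Z_s|^p)\le C(s-a)^{p(H-\alpha)}(1+(s-a)^{p\beta})(1+s^{p\lambda})$, and integrating against $(b-a)^{p\alpha-1}$ as above collapses the $\alpha$-dependence and reproduces the stated $(b-a)^{pH}(1+(b-a)^{p\beta})(1+b^{p\lambda})$. For case (2), since $H>\frac12$, I would instead estimate $\mathbb{E}(|Z_s|^p)$ with Lemma \ref{pnorm.g}, separating the weight from $u$ by H\"older's inequality in $r$ with exponents tuned to $1/p+1/q=H$: using $pH-1=p/q$ one finds the weight integral $\int_a^s(s-r)^{-\alpha q}dr$ is finite precisely when $\alpha<1/q$, and the $\alpha$-powers again cancel after the outer $s$-integration, leaving the two terms $(b-a)^{p/q}\int_a^b\mathbb{E}(|u_s|^p)ds$ and $(b-a)^{2p/q}\int_a^b\int_a^s\mathbb{E}(|D_t u_s|^p)dt\,ds$.

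I expect the main obstacle to be the case (1) inner estimate: verifying that, after introducing the singular weight, all the exponent constraints of Proposition \ref{u.pmom} (namely $\lambda_0>-H$, $\lambda_2>-\frac12$, $\lambda_3>\frac12-H$) can be met simultaneously for both $u^s$ and $Du^s$. This forces $1/p<\alpha<H$ together with an auxiliary H\"older exponent $\beta'\in(\frac12-H,\,\frac12-\alpha)$ for the increments of the weight, and it is the compatibility of these windows --- guaranteed by $p>1/H$ in the admissible range $H\in(\frac14,\frac12)$ and by $\beta>\frac12-H$ --- that must be checked with care. Once this bookkeeping is in place, the $s$-integration and the cancellation of the $\alpha$-powers are routine.
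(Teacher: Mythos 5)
Your proposal follows essentially the same route as the paper's proof: the same Beta-identity factorization and stochastic Fubini step from \cite{AN}, the same H\"older-in-time bound reducing the supremum to $(b-a)^{p\alpha-1}\int_a^b \mathbb{E}(|Z_s|^p)\,ds$, then Meyer's inequality \eqref{div.pm} together with Proposition \ref{u.pmom} applied to the weighted process for $H<\tfrac12$, and Lemma \ref{pnorm.g} with H\"older tuned to $\tfrac1p+\tfrac1q=H$ for $H>\tfrac12$. The only (immaterial) difference is that you bound the weight increment by $C(\tau-\rho)^{\beta'}(s-\tau)^{-\alpha-\beta'}$ with a free exponent $\beta'\in(\tfrac12-H,\tfrac12-\alpha)$, whereas the paper takes the specific choice $\beta'=\alpha$ via the inequality $1-(r-t)^{\alpha}(r-s)^{-\alpha}\leq (r-s)^{-\alpha}(t-s)^{\alpha}$; both yield the same exponent $pH-p\alpha$ after the $\beta'$ (resp.\ $\alpha$) powers cancel in Proposition \ref{u.pmom}.
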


\begin{proof}
We may assume that $u$ is a smooth function. The general case follows from a limiting  argument.   
 We will use the elementary integral $\int_s^t (t-r)^{\alpha-1}(r-s)^{-\alpha} dr = \frac{\pi}{\sin(\alpha \pi)}$  for any $\alpha\in (0, 1)$, and a stochastic  Fubini's theorem.  For any $\alpha \in (\frac{1}{p}, 1)$,  we have 
\begin{eqnarray}
&&  \mathbb{E} \left(\sup_{t \in [a,b]} \left|\int_a^t u_s dB_s\right|^p\right) \nonumber \\
&& \quad =  \left(\frac{\sin(\alpha \pi)}{\pi}\right)^p \mathbb{E} \left(\sup_{t \in [a,b]} \left|\int_a^t \left (\int_s^t (t-r)^{\alpha-1}(r-s)^{-\alpha} dr \right) u_s dB_s \right|^p  \right)\nonumber \\
 & &  \quad  \left(\frac{\sin(\alpha \pi)}{\pi}\right)^p \mathbb{E} 
 \left( \sup_{ t\in [a,b]} \left|\int_a^t  \left(\int_a^r (r-s)^{-\alpha} u_s dB_s\right) (t-r)^{\alpha-1} dr \right|^p\right)\nonumber \\
 & &  \quad \leq \left(\frac{\sin(\alpha \pi)}{\pi}\right)^p \mathbb{E} \left(\sup_{t \in [a,b]} \left(\int_a^t \left|\int_a^r (r-s)^{-\alpha} u_s dB_s \right|^p dr\right) \left|\int_a^t (t-r)^{\frac{p(\alpha-1)}{p-1}} dr \right|^{p-1}\right)\nonumber \\
 &  &  \quad \leq C_{\alpha, p} (b-a)^{p \alpha -1} \int_a^b \mathbb{E}(|G_r|^p) dr\,,\label{max.ineq}
\end{eqnarray}
where  
\[
  G_r := \int_a^r (r-s)^{-\alpha} u_s dB_s, \qquad r \in [a,b] \,.
\]

\noindent 
{\it Case $H \in (\frac{1}{2}, 1)$}: Using Lemma \ref{pnorm.g} for $\alpha \in (\frac{1}{p}, \frac{1}{q})$ and $\frac{1}{p} + \frac{1}{q} = H$, we get 
\begin{eqnarray*}
  \mathbb{E}(|G_r|^p) & \leq & C_{p,H} \left( \left( \int_a^r (r-s)^{-\frac{\alpha}{H}} |\mathbb{E}(u_s)|^{\frac{1}{H}} ds\right)^{pH} + \mathbb{E}\left(\int_a^r \int_a^s (r-s)^{-\frac{\alpha}{H}} |D_{\mu} u_s|^{\frac{1}{H}} d\mu ds\right)^{pH}\right) \\
  & \leq & C_{p,H} \left(\int_a^r (r-s)^{-\alpha q} ds\right)^{\frac{p}{q}} \left(\int_a^r |\mathbb{E}(u_s)|^p ds\right)\\
  & & + \ C_{p,H} \left(\int_a^r \int_a^s (r-s)^{-\alpha q} d\mu ds \right)^{\frac{p}{q}} \left(\int_a^r \int_a^s \mathbb{E}(|D_{\mu} u_s|)^p d\mu ds\right) \\
  & \leq & C_{\alpha,p,q,H} \left( (r-a)^{\frac{p}{q} - \alpha p} \int_a^r \mathbb{E}(|u_s|^p) ds +  (r-a)^{\frac{2p}{q} - \alpha p} \int_a^r \int_a^s \mathbb{E}(|D_{\mu} u_s|^p) d\mu ds \right) \,.
\end{eqnarray*}
Therefore,
\[
  \mathbb{E} \left(\sup_{t \in [a,b]} \left|\int_a^t u_s dB_s\right|^p\right)  \leq C \left( (b-a)^{\frac{p}{q}} \int_a^b \mathbb{E}(|u_s|^p) ds +  (b-a)^{\frac{2p}{q}} \int_a^b \int_a^s \mathbb{E}(|D_{\mu} u_s|^p) d\mu ds \right) \,.
\]	

\noindent 
{\it Case $H \in (0, \frac{1}{2})$}:  Denote $\psi(t) = (r-t)^{-\alpha} u_t$ for $t \in [a,r)$. Then by \eqref{div.pm},
\begin{equation}\label{gr.pm}
  \mathbb{E} (|G_r|^p) \leq \mathbb{E} (\|\psi \mathbb{1}_{[a,r)}\|_{\mathfrak{H}^d}^p) + \mathbb{E} (\| D(\psi \mathbb{1}_{[a,r)})\|_{\mathfrak{H}^d \otimes \HH^d } ^p) \,.
\end{equation}
We will estimate the above two items on the right-hand side one by one. For $a \leq s < t < r$,
\begin{eqnarray*}
	|\psi(t) - \psi(s)| & = & |(r-t)^{-\alpha} (u_t - u_s) + \left((r-t)^{-\alpha} - (r-s)^{-\alpha}\right) u_s| \\
	& \leq & (r-t)^{-\alpha} |u_t - u_s| + (r-t)^{-2\alpha} (t-s)^\alpha |u_s| \,,
\end{eqnarray*}
where we have used the inequality $1-(r-t)^\alpha (r-s)^{-\alpha} \leq (r-s)^{-\alpha} (t-s)^\alpha$. Thus, using Hypothesis 3.3 (ii), we can write
\begin{eqnarray}
  \|\psi(t) - \psi(s)\|_{L^p(\Omega;\R^d)} \nonumber & \leq& (r-t)^{-\alpha} \|u_t - u_s\|_{L^p(\Omega;\R^d)} + (r-t)^{-2\alpha} (t-s)^\alpha \|u_s\|_{L^p(\Omega;\R^d)} \nonumber\\
  & \leq&  K(r-t)^{-\alpha} (t-s)^\beta + \|u\|_{p,a,b} (r-t)^{-2\alpha}(t-s)^\alpha,
\end{eqnarray}
and \begin{eqnarray}
   \|\psi(t)\|_{L^p(\Omega;\R^d)} &= & (r-t)^{-\alpha} \|u_t \mathbb{1}_{[a,r)}\|_{L^p(\Omega;\R^d)} 
    \leq (r-t)^{-\alpha} \|u\|_{p,a,b}  \,, \label{psi}
\end{eqnarray}
This means that $\psi$ satisfies the assumptions of  Proposition \ref{u.pmom} with $\dnW =\mathbb{R}^d $ with the   functions
$L^t(-\alpha ,0)$ and $L^{t,s}( -\alpha, \beta,0)+ L^{t,s} (-2\alpha, \alpha,0)$ if we  choose $\alpha \in (\max(\frac{1}{p}, \frac{1}{2}-H),H)$, which requires  $H \in (\frac{1}{4}, \frac{1}{2})$.  In this way, we obtain
\begin{eqnarray}
  \mathbb{E} (\|\psi \mathbb{1}_{[a,r]}\|_{\mathfrak{H}^d}^p) &\leq& C (r-a)^{pH-p\alpha}  (1 + (r-a)^{p\beta} ) \label{psi.hnorm}  \,.
\end{eqnarray}
 Similarly,  using Hypotheses 3.3 (iii) and (iv), we have
\begin{eqnarray}
 &&   \|D\psi(t) - D\psi(s)\|_{L^p(\Omega; \mathfrak{H}^d \otimes \R^d)} \nonumber\\
 & \leq & (r-t)^{-\alpha} \|Du_t - Du_s\|_{L^p(\Omega; \mathfrak{H}^d \otimes \R^d)} + (r-t)^{-2\alpha} (t-s)^\alpha \|Du_s\|_{L^p(\Omega; \mathfrak{H}^d \otimes \R^d)} \nonumber \\
	 & \leq & K(r-t)^{-\alpha} (t-s)^\beta  s^\lambda  + \ K(r-t)^{-2\alpha} (t-s)^\alpha  s^\lambda \label{dpsi.dif}
\end{eqnarray}
and
\begin{eqnarray}
   \|D\psi(t)\|_{L^p(\Omega ; \mathfrak{H}^d \otimes \R^d)} &= & (r-t)^{-\alpha} \|Du_t\|_{L^p(\Omega ; \mathfrak{H}^d \otimes \R^d)} 
    \leq   K(r-t)^{-\alpha}  t^\lambda.  \label{dpsi}
\end{eqnarray}
This means that $D\psi$ satisfies the assumptions of  Proposition \ref{u.pmom} with $\W =\HH^d \otimes \R^d$ with the   functions
$L^t(-\alpha ,\lambda)$ and $L^{t,s}( -\alpha, \beta,\lambda)+ L^{t,s} (-2\alpha, \alpha,\lambda)$.
Using Proposition \ref{u.pmom} for $D\psi$ with $\dnW =\mathfrak{H}^d \otimes \R^d$, we have
\begin{eqnarray}
  \mathbb{E} (\| D(\psi \mathbb{1}_{[a,r]})\|_{ \HH^d \otimes \HH^d}^p) 
   &\leq & C  (r-a)^{pH-p\alpha} ( 1+(r-a)^{p\beta})   b^{p\lambda}.
 \label{dpsi.hnorm}  
\end{eqnarray}
Substituting the bounds of \eqref{psi.hnorm} and \eqref{dpsi.hnorm} into \eqref{gr.pm}, we have 
\begin{eqnarray}
  \mathbb{E}(|G_r|^p) & \leq &
  C (r-a)^{pH-p\alpha} (1+(r-a)^{p\beta}) (1+ b^{p\lambda}) \,.
\end{eqnarray}
Finally, putting this estimate into 
 \eqref{max.ineq}, we complete the proof.
\end{proof}

\section {Proof of the main theorem} \label{proof}

\subsection{Estimates of the solution of SDE}
Before we present the proof of the  main theorem, we  need  some  
auxiliary results.  First, we prove some estimates for  the $p$-th moment of the solution of the SDE  (\ref{nl.sde}).

\begin{prop} \label{p.moment}
Let $H \in (0,1)$ and $p\geq 1$.  Assume the drift function $f$  of the SDE (\ref{nl.sde}) satisfies Hypotheses \ref{f.cond12} and  its components belong to $\mathcal{C}^1_p(\RR^m)$. 
 Let  $X$ be the unique solution to  (\ref{nl.sde}).    Then we have the following statements:
\begin{itemize}
	\item[(1)] There exists a constant $C_p > 0$ such that $ \|X_t\|_{L^p(\Omega;\mathbb{R}^m)} \leq C_p$, and $\|X_t - X_s\|_{L^p(\Omega;\mathbb{R}^m)} \leq C_p |t-s|^{H}$ for all $t \geq s \geq 0$.
	\item[(2)] The Malliavin derivative of the solution $X_t$ satisfies for all $0\le s \leq t $
	\begin{equation}\label{ineq.dext}
	  |D_s X_t| \leq |\sigma | e^{-L_1(t-s)} \,, \ {\rm a.s.}
	\end{equation}
	Moreover, if $v \leq u \leq s \leq t$, we have
	\begin{equation}\label{ineq3.dext}
	   \|D_u X_t - D_v X_t\|_{L^p(\Omega; \mathbb{R}^{m \times d})} \leq C  e^{-L_1(t-u)}(1\wedge   |u-v|)  \,,
	\end{equation}
	\begin{equation}\label{ineq2.dext}
	  \|D_u X_t - D_u X_s\|_{L^p(\Omega; \mathbb{R}^{m \times  d})} \leq Ce^{-L_1(s-u)}(1\wedge |t-s| ) \,,
	\end{equation}
	and \begin{equation}\label{ineq4.dext}
	   \|D_u X_t - D_v X_t - (D_u X_s - D_v X_s)\|_{L^p(\Omega; \mathbb{R}^{m\times  d})} \leq C  e^{-L_1(s-u)} (1\wedge   |u-v|) (1\wedge  |t-s|)   \,,
	\end{equation}
where  $C$ is a generic constant.
\end{itemize}
\end{prop}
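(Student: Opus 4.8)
The plan is to treat the two parts separately, proving first the uniform moment bound in (1), which then feeds into every remaining estimate. For $\|X_t\|_{L^p(\Omega;\RR^m)}\le C_p$ I would exploit that the additive noise cancels in the difference of two solutions. Let $\overline X_t:=\overline X(\mu_t\omega)$ be the stationary solution behind Theorem \ref{ergodic}: it solves \eqref{nl.sde} with the same $B$, and by invariance of $\mathbb P$ under $\mu_t$ it has $t$-independent moments $\|\overline X_t\|_{L^p}=\|\overline X\|_{L^p}<\infty$. Then $X_t-\overline X_t$ is differentiable and, by the dissipativity \eqref{1.6},
\[
\tfrac{d}{dt}|X_t-\overline X_t|^2=-2\langle X_t-\overline X_t,(f(X_t)-f(\overline X_t))\theta\rangle\le -2L_1|X_t-\overline X_t|^2 ,
\]
so $|X_t-\overline X_t|\le e^{-L_1 t}|x_0-\overline X_0|$ and the triangle inequality yields $\|X_t\|_{L^p}\le e^{-L_1t}\|x_0-\overline X\|_{L^p}+\|\overline X\|_{L^p}\le C_p$. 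For the H\"older estimate I would write $X_t-X_s=-\int_s^t f(X_r)\theta\,dr+\sigma(B_t-B_s)$, bound the drift by $\int_s^t\|f(X_r)\theta\|_{L^p}\,dr\le C(t-s)$ using the growth of $f\in\mathcal{C}^1_p$ and the uniform bound just proved, and bound the noise by $C|\sigma|(t-s)^H$ via self-similarity; for $t-s\le1$ this gives $C(t-s)^H$, and for $t-s\ge1$ the uniform bound gives $\|X_t-X_s\|_{L^p}\le 2C_p\le 2C_p(t-s)^H$.

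For part (2) the point is that, since the noise is additive, the Malliavin derivative solves a linear homogeneous matrix ODE. Setting $A(r):=\sum_{j=1}^l\theta_j\nabla f_j(X_r)$, which satisfies $A(r)\ge L_1 I_m$ by Hypothesis \ref{f.cond12}, differentiation of the integral equation for $X$ gives, for $s\le t$,
\[
D_sX_t=\sigma-\int_s^t A(r)\,D_sX_r\,dr,\qquad \tfrac{d}{dt}D_sX_t=-A(t)\,D_sX_t,\quad D_sX_s=\sigma .
\]
(The Malliavin differentiability of $X_t$ and this equation are justified under the stated $\mathcal{C}^1_p$/$\mathcal{C}^2_p$ regularity by the existing theory for such equations.) I would introduce the fundamental matrix $\Phi(t,s)$ solving $\partial_t\Phi(t,s)=-A(t)\Phi(t,s)$, $\Phi(s,s)=I_m$, so that $D_sX_t=\Phi(t,s)\sigma$ and the flow identity $\Phi(t,s)=\Phi(t,u)\Phi(u,s)$ holds. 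Running the same Gronwall argument on $|\Phi(t,s)v|^2$ gives the operator-norm bound $\|\Phi(t,s)\|\le e^{-L_1(t-s)}$, and hence the a.s.\ estimate $|D_sX_t|\le|\sigma|e^{-L_1(t-s)}$ of \eqref{ineq.dext}.

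The three difference estimates then follow from algebra on $\Phi$ together with the elementary bound $\|\Phi(t,s)-I_m\|\le\int_s^t\|A(r)\|\,e^{-L_1(r-s)}\,dr$. For \eqref{ineq2.dext} with $u\le s\le t$ I would write $D_uX_t-D_uX_s=(\Phi(t,s)-I_m)\Phi(s,u)\sigma$; taking $L^p$ norms, using $\|A(r)\|_{L^p}\le C(1+\|X_r\|_{L^{p\gamma}}^\gamma)\le C$ from part (1) and $\int_s^t e^{-L_1(r-s)}dr\le C(1\wedge|t-s|)$, gives $Ce^{-L_1(s-u)}(1\wedge|t-s|)$. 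Symmetrically, $D_uX_t-D_vX_t=\Phi(t,u)(I_m-\Phi(u,v))\sigma$ produces \eqref{ineq3.dext}. For the double increment \eqref{ineq4.dext} with $v\le u\le s\le t$ I would combine both factorizations,
\[
D_uX_t-D_vX_t-(D_uX_s-D_vX_s)=(\Phi(t,s)-I_m)\,\Phi(s,u)\,(I_m-\Phi(u,v))\sigma,
\]
estimate the two $A$-integrals by H\"older's inequality (the moment bound of part (1) holds for every exponent), and collect factors to reach $Ce^{-L_1(s-u)}(1\wedge|u-v|)(1\wedge|t-s|)$.

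The main obstacle is the uniform-in-$t$ moment bound. The naive route, setting $Y_t=X_t-\sigma B_t$ (which is differentiable) and running a dissipative Gronwall estimate, only yields $\|Y_t\|_{L^p}\lesssim t^{H\gamma}$, and the triangle inequality then fails to control $X_t=Y_t+\sigma B_t$ because the genuine cancellation between $Y_t$ and $\sigma B_t$ is invisible at the level of norms. This is precisely why I would route the argument through the stationary solution $\overline X_t$, whose moments are $t$-independent and against which the noise cancels exactly; everything else becomes routine once the uniform bound and the linear ODE for $D_sX_t$ are in hand.
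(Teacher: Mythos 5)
Your proposal is correct. On part (2) it follows essentially the paper's route: both proofs differentiate \eqref{nl.sde} to obtain the linear ODE $\frac{d}{dt}D_sX_t=-A(t)\,D_sX_t$, $D_sX_s=\sigma$, with $A(t)=\sum_{j=1}^l\theta_j\nabla f_j(X_t)\ge L_1 I_m$, and all four estimates come from the dissipative Gronwall bound together with the boundedness of $\|A(r)\|$ in every $L^p(\Omega)$ (a consequence of part (1) and $f_j\in\mathcal{C}^1_p$). The difference is one of packaging: the paper treats each increment separately --- it re-runs the Gronwall argument on $D_uX_t-D_vX_t$, which solves the homogeneous ODE for $t\ge u$ with initial value $\int_v^u A(r)D_vX_r\,dr$, and bounds $D_uX_t-D_uX_s$ and the double increment directly as integrals $\int_s^t A(r)(\cdots)\,dr$, using Cauchy--Schwarz in the last case --- whereas you route everything through the fundamental matrix $\Phi(t,s)$, its flow property, and the two bounds $\|\Phi(t,s)\|\le e^{-L_1(t-s)}$ and $\|\Phi(t,s)-I_m\|\le\int_s^t\|A(r)\|e^{-L_1(r-s)}\,dr$; your factorization $(\Phi(t,s)-I_m)\Phi(s,u)(I_m-\Phi(u,v))\sigma$ makes \eqref{ineq4.dext} particularly transparent, and the H\"older step you use there (possible because part (1) holds for every exponent) plays exactly the role of the paper's Cauchy--Schwarz step. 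On part (1) you do more than the paper, which simply refers to \cite{gkn}, \cite{mh} and \cite{nt}; your argument via the stationary solution is in essence the argument of those references. The one caveat is that you invoke a property not contained in the statement of Theorem \ref{ergodic}: you need $\overline X(\mu_t\omega)$ to be a pathwise solution of \eqref{nl.sde} driven by the same noise, while the theorem as stated only provides the convergence $|X_t(\omega)-\overline X(\mu_t\omega)|\to 0$ and the moment bounds on $\overline X$. That property is part of the construction in \cite{mh} and \cite{gkn}, so your part (1), like the paper's, ultimately rests on the cited literature; you have simply made explicit how the uniform moment bound and the $H$-H\"older estimate in $L^p$ follow from it.
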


\begin{proof}
For  the proof of the first result we refer  to   \cite{gkn}, \cite{mh}, and \cite{nt}.

To show  the second  part of this proposition,
taking the Malliavin derivative for $s \leq t$ on  both sides of  equation \eqref{nl.sde} yields
\begin{equation} \label{de.xt}
  D_s X_t =  - \int_s^t \sum_{j=1}^l \theta_j \nabla f_j(X_r) D_s X_r dr + \sigma  \,,
\end{equation}
where $\sigma = (\sigma_1, \ldots, \sigma_d) \in \mathbb{R}^{m  \times d}$. Denote $Z_t = D_s X_t$ for $t\ge s$. We can write the above equation as the following ordinary differential equation for $t\ge s$:
\[
\begin{cases}
  d Z_t = - \sum_{j=1}^l \theta_j \nabla f_j(X_t) Z_t dt, \\
  Z_s = \sigma.
\end{cases}
\]
Differentiating  $|Z_t|^2$ with respect to $t$, and using (\ref{1.6}), we get
\[
  \frac{d|Z_t|^2}{dt} = 2\langle Z_t,  - \sum_{j=1}^l \theta_j \nabla f_j(X_t) Z_t \rangle \leq - 2 L_1 |Z_t|^2 \,.
\]
By Gronwall's lemma, we obtain
\[
  |Z_t|^2 \leq e^{-2L_1(t-s)} \left| \sigma \right|^2 \,,
\]
and this implies \eqref{ineq.dext}. 

We now  proceed  to the proof of  \eqref{ineq3.dext}. For $v \leq u \leq t$,   equation \eqref{de.xt} implies
\begin{equation}\label{md.dif}
  D_uX_t - D_v X_t = -\int_u^t \sum_{j=1}^l \theta_j \nabla f_j(X_r) (D_u X_r - D_v X_r) dr + \int_v^u \sum_{j=1}^l \theta_j \nabla f_j(X_r) D_vX_r dr \,.
\end{equation}
Repeating the above arguments for $D_uX_t -D_vX_t$, $t\ge u$, we can write
\[
  |D_u X_t - D_v X_t| \leq e^{-L_1(t-u)} \Big|\int_v^u \sum_{j=1}^l \theta_j \nabla f_j(X_r) D_v X_r dr\Big| \,.
\]
Applying Minkowski inequality and \eqref{ineq.dext} to  $D_v X_r$, and then using the fact that the $L^p$-norm of $|\nabla f_j(X_r)| $ is bounded due to
condition (\ref{1.4}), we obtain
\begin{eqnarray*}  
  \|D_u X_t - D_v X_t\|_{L^p(\Omega; \mathbb{R}^{m \times d})} &\leq & e^{-L_1(t-u)} \int_v^u \|\sum_{j=1}^l \theta_j \nabla f_j(X_r) D_v X_r \|_{L^p(\Omega; \mathbb{R}^{m \times  d})} dr \\
  &  \leq & C  e^{-L_1(t-u)} \int_v^u e^{-L_1(r-v)} dr \leq  C  e^{-L_1(t-u)} (1 \wedge |u-v|)  \,.
\end{eqnarray*}
This proves \eqref{ineq3.dext}. 
To prove \eqref{ineq2.dext}, we use  equation \eqref{de.xt} to obtain
\begin{equation*}
  \mathbb{E}(|D_uX_t - D_u X_s|^p )= \mathbb{E} \left( \left|   \int_s^t \sum_{j=1}^l \theta_j \nabla f_j(X_r) D_uX_r dr\right|^p \right) \,.
\end{equation*}
Applying Minkowski inequality and using \eqref{ineq.dext} for $D_uX_r$, and the fact that the $L^p$-norm of $|\nabla f_j(X_r)|$ is bounded, we obtain
\begin{eqnarray*}  
  \|D_uX_t - D_u X_s\|_{L^p(\Omega; \mathbb{R}^{m \times d})} &\leq & \int_s^t \left  \|\sum_{j=1}^l \theta_j \nabla f_j(X_r) D_uX_r  \right \|_{L^p(\Omega; \mathbb{R}^{m \times d})} dr \\
  &  \leq & C \int_s^t e^{-L_1(r-u)} dr \leq C e^{-L_1(s-u)} ( 1 \wedge |t-s|)  \,.
\end{eqnarray*}
Finally we   prove \eqref{ineq4.dext}. Using \eqref{md.dif}, we have the following estimate
\[
  \|D_u X_t - D_v X_t - (D_u X_s - D_v X_s) \|_{L^p(\Omega; \mathbb{R}^{m \times  d})} =\left  \| \int_s^t \sum_{j=1}^l \theta_j \nabla f_j(X_r) (D_uX_r - D_v X_r) dr \right \|_{L^p(\Omega; \mathbb{R}^{m  \times d})} \,.
\]
Applying Minkowski inequality and Cauchy-Schwartz inequality yields
\begin{eqnarray*}
&& \|D_u X_t - D_v X_t - (D_u X_s - D_v X_s) \|_{L^p(\Omega; \mathbb{R}^{m \times  d})} \\
 & \leq & C \int_s^t \|\nabla f_j(X_r) \|_{L^{2p}(\Omega; \mathbb{R}^{m \times m})} \|D_uX_r - D_v X_r\|_{L^{2p}(\Omega; \mathbb{R}^{m  \times  d})} dr \\
& \leq & C (1 \wedge |u-v|)  \int_s^t e^{-L_1(r-u)} dr \leq C e^{-L_1(s-u)}(1\wedge |u-v| ) (1\wedge  |t-s|)  \,.
\end{eqnarray*}
This  proves   \eqref{ineq4.dext} and 
 proof of the proposition is complete.
\end{proof}
\begin{remark} It is worth pointing out that the  solution of the SDE (\ref{nl.sde})   is H\"{o}lder continuous in $L^p$ for all $p\ge 1$ with exponent $H$, i.e.,  
$\|X_t-X_s\|_{L^p(\Omega;\mathbb{R}^m)}\le C|t-s|^H$. However, the Malliavin derivative of $X_t$ is more regular, i.e.,
$ \|D_u X_t - D_u X_s\|_{L^p(\Omega; \mathbb{R}^{m\times   d})} \leq C |t-s|$. That is, the H\"older continuity exponent is improved from $H$ to $1$. This is because the noise in the SDE is additive.
\end{remark}

The next lemma provides bounds for the norm of the derivative of a function of the solution to equation (\ref{nl.sde}).
\begin{lemma}\label{derg.norm}
	Let $H \in (0,\frac{1}{2})$ and $p \geq 2$.  Consider a  function $g= (g^1, \dots, g^d) :\mathbb{R}^m \to \mathbb{R}^d$  whose components  belong
	to  $\mathcal{C}^2_p(\RR^m)$.  Then for all $0\le s\leq t$, we have
		\begin{equation} \label{ddg.norm}
		  \|Dg(X_t) - Dg(X_s)\|_{L^p(\Omega;\mathfrak{H}^{d }\otimes \mathbb{R}^d)} \leq K (t-s)^H  s^\lambda \,,
		\end{equation}
	and
	 \begin{equation}\label{dg.norm}
	   \|Dg(X_s)\|_{L^p(\Omega;\mathfrak{H}^{d } \otimes \mathbb{R}^d)} \leq K s^\lambda \,,
	 \end{equation}
for any $\lambda \in (0, H]$, where $K$ is a constant that may depend on $\lambda$.
\end{lemma}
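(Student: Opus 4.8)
The plan is to differentiate through $g$ with the chain rule and then convert every $\mathfrak{H}^d$-norm into the computable norm $\|\cdot\|_{K^d_T}$ from the preliminaries. For each output component $k$ of $g$ the chain rule gives $D_r g^k(X_u)=\nabla g^k(X_u)D_rX_u$, so that $\|Dg(X_u)\|^2_{\mathfrak{H}^d\otimes\R^d}=\sum_{k=1}^d\|Dg^k(X_u)\|^2_{\mathfrak{H}^d}$ and it suffices to estimate each scalar term. Before doing so I would record two facts coming from Proposition \ref{p.moment}(1) together with the growth bound \eqref{1.5}: first, $\|\nabla g^k(X_u)\|_{L^q(\Omega)}\le C$ uniformly in $u$ (polynomial growth of $\nabla g^k$ against uniformly bounded moments of $X_u$); second, by the mean value theorem and the Hessian bound in \eqref{1.5}, $\|\nabla g^k(X_t)-\nabla g^k(X_s)\|_{L^q(\Omega)}\le C\,\|\,1+|X_t|^\gamma+|X_s|^\gamma\,\|_{L^{2q}}\,\|X_t-X_s\|_{L^{2q}}\le C(t-s)^H$. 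This last step is exactly where the hypothesis $g^k\in\mathcal{C}^2_p(\R^m)$, rather than merely $\mathcal{C}^1_p$, is used.

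For \eqref{dg.norm} I would apply \eqref{hnorm.est} with $T=s$, reducing to $\|Dg^k(X_s)\|_{K^d_s}$, whose square splits into a diagonal part weighted by $(s-r)^{2H-1}+r^{2H-1}$ and an increment part weighted by $(r-\rho)^{H-3/2}$. Pulling the $L^p(\Omega)$-norm inside both integrals by Minkowski's inequality, the diagonal part is controlled via $\|D_rg^k(X_s)\|_{L^p}\le\|\nabla g^k(X_s)\|_{L^{2p}}\|D_rX_s\|_{L^{2p}}\le Ce^{-L_1(s-r)}$ using \eqref{ineq.dext}, and the increment part via $\|D_rg^k(X_s)-D_\rho g^k(X_s)\|_{L^p}=\|\nabla g^k(X_s)(D_rX_s-D_\rho X_s)\|_{L^p}\le Ce^{-L_1(s-r)}(1\wedge(r-\rho))$ using \eqref{ineq3.dext}; note that here the same gradient $\nabla g^k(X_s)$ multiplies both terms, so no Hölder estimate of the gradient is needed. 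The key point is that the improved regularity exponent $1$ in $(1\wedge(r-\rho))$ makes $(1\wedge(r-\rho))(r-\rho)^{H-3/2}$ integrable near the diagonal. Carrying out the deterministic integrals, the exponential weight $e^{-2L_1(s-r)}$ produces a bound of order $1\wedge s^{2H}$, which is $\le Cs^{2\lambda}$ for every $\lambda\in(0,H]$, giving \eqref{dg.norm}.

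For the increment \eqref{ddg.norm} I would write, on $[0,s]$,
$$D_r g^k(X_t)-D_r g^k(X_s)=\big(\nabla g^k(X_t)-\nabla g^k(X_s)\big)D_rX_s+\nabla g^k(X_t)\big(D_rX_t-D_rX_s\big),$$
while on $(s,t]$ one simply has $D_rg^k(X_t)-D_rg^k(X_s)=\nabla g^k(X_t)D_rX_t$ since $D_rX_s=0$. The first summand is handled by Cauchy--Schwarz together with the Hölder estimate $\le C(t-s)^H$ for the gradient difference from the first paragraph and the bound $\|DX_s\|_{L^{2p}(\Omega;\mathfrak{H}^d\otimes\R^m)}\le Cs^\lambda$ coming from the same computation as \eqref{dg.norm}, which yields the factor $(t-s)^Hs^\lambda$. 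For the second summand I would again reduce to $\|\cdot\|_{K^d_t}$ and insert the sharper increment estimates \eqref{ineq2.dext} and \eqref{ineq4.dext}, which carry both the factor $(1\wedge(t-s))$ and the exponential decay $e^{-L_1(s-r)}$; here $(1\wedge(t-s))\le(t-s)^H$ supplies the Hölder rate and the exponential decay supplies the power of $s$.

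I expect the main obstacle to be precisely this second summand. One must estimate the weighted $K^d_t$-norm of the increment process $r\mapsto D_rX_t-D_rX_s$, which does not vanish on $(s,t]$, and show that the interplay among the weights $(t-r)^{2H-1}+r^{2H-1}$, the Hölder factor $(1\wedge(t-s))$, and the exponential decay yields the target after taking the $L^{p/2}(\Omega)$-norm by Minkowski. Organizing these integrals --- splitting $[0,t]=[0,s]\cup(s,t]$ and tracking how the two weight contributions combine with the $(t-s)$-dependence --- is the delicate bookkeeping; everything else reduces to the moment estimates of Proposition \ref{p.moment} and the isometric bound \eqref{hnorm.est}.
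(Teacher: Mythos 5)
You follow essentially the same route as the paper: chain rule $D_rg^k(X_u)=\nabla g^k(X_u)D_rX_u$, reduction of the $\mathfrak{H}^d$-norm to the weighted norm $\|\cdot\|_{K^d_T}$ through \eqref{hnorm.est}, Minkowski's inequality, the bounds \eqref{ineq.dext}--\eqref{ineq4.dext} of Proposition \ref{p.moment}, and the mean-value/Hessian argument (this is where $\mathcal{C}^2_p$ enters) giving $\|\nabla g^k(X_t)-\nabla g^k(X_s)\|_{L^q(\Omega)}\le C(t-s)^H$. Your proof of \eqref{dg.norm} is correct, and your treatment of the first summand $(\nabla g^k(X_t)-\nabla g^k(X_s))D_rX_s$ --- pulling the $r$-independent random factor out of the $\mathfrak{H}^d$-norm and using $\|DX_s\|_{L^{2p}(\Omega;\mathfrak{H}^d\otimes\R^m)}\le Cs^\lambda$ --- is sound and somewhat tidier than the paper's pointwise case analysis of the increments $\phi(u)-\phi(v)$.

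The gap is exactly the step you flag as the main obstacle, and it cannot be closed. For the second summand, on $(s,t]$ the process equals $\nabla g^k(X_t)D_rX_t$, whose decay is $e^{-L_1(t-r)}$, not $e^{-L_1(s-r)}$; its diagonal contribution to the $K^d_t$-norm is of order $\int_s^t\left((t-r)^{2H-1}+r^{2H-1}\right)dr\asymp(t-s)^{2H}$, which carries no power of $s$, so ``the exponential decay supplies the power of $s$'' fails precisely there. In fact \eqref{ddg.norm} as stated (with the factor $s^\lambda$) is false: fix $t$ and let $s\downarrow0$; by \eqref{dg.norm} the left-hand side tends to $\|Dg(X_t)\|_{L^p(\Omega;\mathfrak{H}^d\otimes\R^d)}$, which is in general strictly positive, while the right-hand side tends to $0$. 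What your argument (and, on close reading, the paper's own proof) actually establishes is $\|Dg(X_t)-Dg(X_s)\|_{L^p(\Omega;\mathfrak{H}^d\otimes\R^d)}\le K(t-s)^H(1+s^\lambda)$: in the paper, only the term $A_2^{(1)}$ carries the factor $s^\lambda$, while $A_1$, $A_2^{(2)}$, $A_2^{(3)}$, $A_2^{(4)}$ are bounded by $C(t-s)^H$ alone. This weaker bound is all that is ever used downstream: Proposition \ref{u.pmom} permits $\lambda_4=0$, so Hypothesis \ref{hypo.u}(iv), Proposition \ref{div.pmom}, Theorem \ref{div.maxineq} and Proposition \ref{zt.pnorm} all go through with $(1+s^\lambda)$ (indeed with $1$) in place of $s^\lambda$. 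So rather than trying to manufacture the $s^\lambda$ factor on $(s,t]$, you should prove the $(t-s)^H(1+s^\lambda)$ bound, which your decomposition already delivers.
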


\begin{proof}
Consider the $\mathfrak{H}^{d } \otimes \R^d $-valued function $\phi:= D g(X_t) - Dg(X_s)$. We can writre
 \begin{eqnarray*}
   \|\phi\|_{\mathfrak{H}^{d }\otimes \R^d}^2 \leq C \|\phi\|_{K^{d}_t\otimes \R^d}^2 & \leq & C \int_0^{  t} |\phi(u)|^2 \left((t-u)^{2H-1} + u^{2H-1}\right) du \\
   && +C \int_0^{  t} \left(\int_v^{  t}|\phi(u)-\phi(v)| (u-v)^{H-\frac{3}{2}} du\right)^2 dv =:C( A_1 + A_2 )\,.
 \end{eqnarray*}	
Therefore,
\[
  \|\phi\|_{L^p(\Omega; \HH^d \otimes \R^d)} \leq  C\sum_{i=1}^2 \|A_i\|_{L^{\frac{p}{2}}(\Omega)}^{\frac{1}{2}} \,.
\]
It remains to  estimate $\|A_i\|_{L^{\frac{p}{2}}(\Omega)}^{\frac{1}{2}} $ for $i=1,2$. First, we write $\phi(u)$ as
 \begin{equation}
    \phi(u)  =  \nabla g(X_t) \cdot (D_uX_t - D_u X_s) + (\nabla g(X_t) - \nabla g(X_s)) \cdot D_u X_s \label{phi.u} \,.
 \end{equation}
Thus, by the submultiplicativity of Hilbert-Schmidt norm, i.e., $|AB| \leq |A||B|$, we have
 \begin{equation*}
 	|\phi(u)| \leq 
 	\begin{cases}
 	|\nabla g(X_t)| |D_uX_t - D_u X_s| + |X_t - X_s| |D_u X_s| \\
 	\qquad\qquad\qquad\qquad\qquad  \times \int_0^1 \|\mathds{H}(g(X_s + r(X_t - X_s)))\| dr&\qquad  \hbox{when}\ u\le s\le t\,;\\
 |\nabla g(X_t)| |D_uX_t| 	&\qquad  \hbox{when}\  s\le u\le   t\,.\\ 
 	\end{cases}
 \end{equation*}
Here $\mathds{H}(g) = (\mathds{H}(g^1), \ldots, \mathds{H}(g^d))$ is understood as the third order tensor, and $\|\mathds{H}(g)\|^2 = \sum_i |\mathds{H}(g^i)|^2$. Since the components of $g$  belong to  $\mathcal{C}^2_p(\RR^m)$, Proposition \ref{p.moment} says that the $L^p$ norm of $|\nabla g(X_t)|$ and $\|\mathds{H}(g(X_t))\|$ are both bounded for any $t\ge 0, p\ge1$. Due to these facts and the inequalities \eqref{ineq.dext} and \eqref{ineq2.dext}, we have
   \begin{eqnarray*}
     \left(\mathbb{E}(|\phi(u)|^p)\right)^{\frac{1}{p}} &\leq& C \begin{cases}
     \left(\mathbb{E}(|\nabla g(X_t)|^{2p})\right)^{\frac{1}{2p}} \left(\mathbb{E}(|D_uX_t - D_u X_s|^{2p})\right)^{\frac{1}{2p}} \\
\qquad   +     e^{-L_1(s-u)}  \int_0^1\left(\mathbb{E}(\|\mathds{H}(g(X_s+r(X_t-X_s)))\|^{2p})\right)^{\frac{1}{2p}}  dr\\
 \qquad  \times \left(\mathbb{E}(|X_t - X_s|^{2p})\right)^{\frac{1}{2p}}&\qquad \hbox{when }\  u\le s \le t;\\
	  \left(\mathbb{E}(|\nabla g(X_t)|^{2p}\right)^{\frac{1}{2p}} \left(\mathbb{E}(|D_uX_t |^{2p})\right)^{\frac{1}{2p}} &\qquad \hbox{when }\    s 
	  \le u\le t\\
	  \end{cases}\\
	  &\leq & C e^{-L_1(s-u)}  (t-s)^H {   \mathds{1}_{\{u<s\}} + C e^{-L_1(t-u)} \mathds{1}_{\{u>s\}}}\,.
   \end{eqnarray*}
Therefore,
\begin{eqnarray*}
	\|A_1\|_{L^{\frac{p}{2}}(\Omega)}^{\frac{1}{2}} & \leq & \left(\int_0^{  t} \left(\mathbb{E}(|\phi|^p)\right)^{\frac{2}{p}} \left((t-u)^{2H-1} + u^{2H-1}\right) du\right)^{\frac{1}{2}} \\
	& \leq & C (t-s)^H \left(\int_0^s e^{- 2 L_1(s-u)} \left((t-u)^{2H-1} + u^{2H-1}\right) du \right)^{\frac{1}{2}} \\
	  &&   + C \left(\int_s^t e^{- 2 L_1(t-u)} \left((t-u)^{2H-1} + u^{2H-1}\right) du \right)^{\frac{1}{2}} \\
	& \leq & C (t-s)^H \,,
\end{eqnarray*}   
where in the last inequality we have used the following arguments. For the second summand, we have bounded $e^{- 2 L_1(t-u)} $ by $1$ and applied  the inequality $t^{2H} - s^{2H} \leq (t-s)^{2H}$.  For the first summand, we bound $(t-u)^{2H-1}$ by $(s-u)^{2H-1}$ and decompose the integral in the intervals $[0,1]$ and $[1,s]$ (if $s\ge 1$).

Now we discuss $A_2$.  For $v < u$, we decompose 
\begin{eqnarray} 
    \phi(u)-\phi(v)   
   &=&\begin{cases}
\left(\nabla g(X_t) - \nabla g(X_s)\right) \cdot (D_uX_t - D_v X_t) \\
\qquad\qquad   + \nabla g(X_s) \cdot \left(D_u X_t - D_v X_t - (D_u X_s - D_v X_s)\right)  &\qquad \hbox{when}\  v<u<s<t\,;\\
    \left(\nabla g(X_t) - \nabla g(X_s)\right) \cdot (D_uX_t - D_v X_t) \\
\qquad\qquad   + \nabla g(X_s) \cdot \left(D_u X_t - D_v X_t + D_v X_s \right)&\qquad \hbox{when}\  v <s<u<t\,;\\
 \nabla g(X_t)  \cdot (D_uX_t - D_v X_t) &\qquad \hbox{when}\  s <v<u<t\,.\\
    \end{cases}  \nonumber\\
  \label{phi.u}  
 \end{eqnarray}  
We shall consider the above three cases separately. 
\newline \noindent  {\it Case 1)}:  $v<u<s<t$.  In this case we have
\begin{eqnarray*}
  \left(\mathbb{E}(|\phi(u) - \phi(v)|^p)\right)^{\frac{1}{p}} & \leq & \int_0^1 \left(\mathbb{E}(\|\mathds{H}(g(X_s+r(X_t-X_s)))\|^{4p})\right)^{\frac{1}{4p}} dr\\
  && \times \left(\mathbb{E}(|X_t - X_s|^{4p})\right)^{\frac{1}{4p}} \left(\mathbb{E}(|D_u X_t - D_v X_t|^{2p})\right)^{\frac{1}{2p}} \\
  & & +  \left(\mathbb{E}(|\nabla g(X_s)|^{2p})\right)^{\frac{1}{2p}} \left(\mathbb{E}(|D_u X_t - D_v X_t - (D_u X_s - D_v X_s)|^{2p})\right)^{\frac{1}{2p}}\,.
\end{eqnarray*}  
 
\noindent {\it Case 2)}:  $s<v<u<t$. We have
\begin{eqnarray*}
   \left(\mathbb{E}(|\phi(u) - \phi(v)|^p)\right)^{\frac{1}{p}} &=& \left(\mathbb{E}( |\nabla g(X_t) \cdot (D_uX_t - D_v X_t) |^p) \right)^{\frac{1}{p}} \\
   &\leq&  \left(\mathbb{E}(|\nabla g(X_t)|^{2p})\right)^{\frac{1}{2p}} \left(\mathbb{E}(|D_uX_t - D_v X_t|^{2p})\right)^{\frac{1}{2p}} \,.
\end{eqnarray*}
\noindent 
{\it Case 3)}:  $v<s<u<t$.  We have
\[
\phi(u) - \phi(v) = \nabla g(X_t) \cdot D_uX_t - \nabla g(X_t) \cdot (D_v X_t - D_v X_s) - (\nabla g(X_t) - \nabla g(X_s)) \cdot D_vX_s \,,
\]
so
\begin{eqnarray*}
  \left(\mathbb{E}(|\phi(u) - \phi(v)|^p)\right)^{\frac{1}{p}} & \leq & \left(\mathbb{E}(|\nabla g(X_t)|^{2p})\right)^{\frac{1}{2p}} \left( \left(\mathbb{E}(|D_u X_t|^{2p})\right)^{\frac{1}{2p}} + \left(\mathbb{E}(|D_v X_t - D_v X_s|^{2p})\right)^{\frac{1}{2p}} \right) \\
  & & +  \int_0^1\left(\mathbb{E}(\|\mathds{H}(g(X_s+r(X_t-X_s)))\|^{4p})\right)^{\frac{1}{4p}}dr \\
  &&\times \left(\mathbb{E}(|X_t - X_s|^{4p})\right)^{\frac{1}{4p}} \left(\mathbb{E}(|D_v X_s|^{2p})\right)^{\frac{1}{2p}} \,.
\end{eqnarray*}
Combining the above cases, and using the inequalities \eqref{ineq.dext} to \eqref{ineq4.dext} in Proposition \ref{p.moment}, we obtain
\begin{eqnarray} 
  \left(\mathbb{E}(|\phi(u) - \phi(v)|^p)\right)^{\frac{1}{p}}  & \leq &    
  C |t-s|^H e^{-L_1(s-u)} |u-v|^\epsilon {  \mathds{1}_{\{v<u<s<t\}}}   + \ Ce^{-L_1(t-u)} |u-v|^\epsilon \mathds{1}_{\{v>s\}} \nonumber \\
    & & \  + \ C \left(e^{-L_1(t-u)} + e^{-L_1(s-v)} |t-s|^H \right) \mathds{1}_{\{v<s<u<t\}} \nonumber \\
	& & \ \  := \sum_{i=1}^4 A_{2i} \,,\label{phiuv.pm} 
\end{eqnarray}
where we have used $ 1 \wedge |u-v| \le C|u-v|^\epsilon$ for any $\epsilon \in [0,1]$ and $ 1 \wedge |t-s| \le C|t-s|^H$. Now we apply Minkowski's inequality to $\|A_2 \|_{L^{\frac{p}{2}}(\Omega)}^{\frac{1}{2}}$ and then an application of \eqref{phiuv.pm} yields
\[
  \|A_2 \|_{L^{\frac{p}{2}}(\Omega)}^{\frac{1}{2}} \leq \left(\int_0^{t} \left(\int_v^{t} \left(\mathbb{E}|\phi(u) - \phi(v)|^p\right)^{\frac{1}{p}}  (u-v)^{H-\frac{3}{2}} du\right)^2 dv\right)^{\frac{1}{2}} \leq \sum_{i=1}^4 A_2^{(i)}\,,
\]
where
\[
  A_2^{(i)} = \left(\int_0^{t} \left(\int_v^{t} A_{2i} (u-v)^{H-\frac{3}{2}} du\right)^2 dv \right)^{\frac{1}{2}} \,.
\]
For $i=1$, fix $\lambda \in (0, H]$ and set $\epsilon = 1-H+\lambda$ for $A_{21}$ in \eqref{phiuv.pm}. In this way, we obtain
\begin{eqnarray*}
	A_2^{(1)} & \leq & C (t-s)^H \left(\int_0^s \left(\int_v^s e^{-L_1(s-u)} (u-v)^{\lambda-\frac{1}{2} } du \right)^2 dv\right)^{\frac{1}{2}} \\
	& & \ \leq  C(t-s)^H \left(\int_0^s (s-v)^{2\lambda - 1} dv\right)^{\frac{1}{2}} \leq C(t-s)^H s^\lambda,
\end{eqnarray*}
where the second inequality follows from the following estimate. For any $\alpha \in (-1, 0)$,
\begin{eqnarray}
	\int_v^s e^{-L_1(s-u)} (u-v)^{\alpha} du & \le & \int_0^{s-v}  e^{-L_1(s-v-x)} x^{\alpha}dx \nonumber \\
	& \le & \int_0^{\frac{s-v}{2}} e^{-L_1(\frac{s-v}{2})} x^{\alpha} dx +  \int_{\frac{s-v}{2}}^{s-v} (\frac{s-v}{2})^{\alpha} e^{-L_1(s-v-x)} dx \nonumber\\
	& & \le C \left(e^{-L_1(\frac{s-v}{2})} (\frac{s-v}{2})^{\alpha + 1} + (\frac{s-v}{2})^{\alpha}\right) \le C(s-v)^{\alpha} \label{int.est2}\,,
\end{eqnarray}
 taking into account the fact that the function $xe^{-L_1 x}$ is bounded on $[0, \infty)$. \\
For $i=2$, choosing $\epsilon =1$, we can write
\[
	A_2^{(2)} \leq C \left(\int_s^t \left(\int_v^t e^{-L_1(t-u)} (u-v)^{H-\frac{1}{2}} du\right)^2dv\right)^{\frac{1}{2}}  
	\] 
Using \eqref{int.est2} by setting $\lambda = H - \frac{1}{2}$,  we have 
\[
  A_2^{(2)} \leq   
  C \left(\int_s^t (t-v)^{2H - 1} dv\right)^{\frac{1}{2}} \leq C(t-s)^H \,.
\]
For $i=3$,
  \begin{eqnarray*}
    A_2^{(3)} &\leq& C \left(\int_0^s \left( \int_s^t e^{-L_1(t-u)} (u-v)^{H-\frac{3}{2}} du\right)^2 dv\right)^{\frac{1}{2}} \\
	 & & \leq C \int_s^t \left(\int_0^s e^{-2L_1(t-u)} (u-v)^{2H-3} dv\right)^{\frac{1}{2}} du \leq C \int_s^t (u-s)^{H-1} du \leq C(t-s)^H \,.
  \end{eqnarray*}
For $i=4$,
   \begin{eqnarray*}
   	 A_2^{(4)} & \leq & C (t-s)^H \left( \int_0^s \left(\int_s^t (u-v)^{H-\frac{3}{2}} du\right)^2 e^{-L_1(s-v)} dv \right)^{\frac{1}{2}} \\
	  & & \leq C (t-s)^H \left(\int_0^s (s-v)^{2H-1} e^{-L_1(s-v)} dv\right)^{\frac{1}{2}} \leq C(t-s)^H \,.
   \end{eqnarray*}
This finishes the proof of \eqref{ddg.norm}. The proof of \eqref{dg.norm} is similar.
\end{proof}

We next apply  Proposition  \ref{p.moment} and Lemma \ref{derg.norm} to deduce the estimate for the $p$-th moment of the divergence integral $Z_{g,t}$ which is defined as
\begin{equation}\label{zgt.def}
Z_{g,t} := \int_0^t g(X_s) dB_s \,,
\end{equation}
where $\{X_t, t \geq 0\}$ is the solution of the SDE \eqref{nl.sde}, and the function $g: \mathbb{R}^m \to \mathbb{R}^d$  satisfies some regularity and growth conditions.

\begin{prop} \label{zt.pnorm}
Let the divergence integral $Z_{g,T}$ be defined by \eqref{zgt.def}.
\begin{enumerate}
 \item If $H \in (\frac{1}{4}, \frac{1}{2})$ and $p \geq 2$, assume that the  components of the function $g: \mathbb{R}^m \to \mathbb{R}^d$ belong to the space $\mathcal{C}^2_p(\RR^m)$. Then we have
    \[
	  \mathbb{E}( |Z_{g,T}|^p)\leq C T^{pH}(1+  T^{p\lambda}) (1+  T^{pH}) \,,
	\]
for any $\lambda \in  (0,H]$,	where $C>0$ is a constant  independent of $T$.
 \item If $H \in (\frac{1}{2}, 1)$, assume that the components of the function $g: \mathbb{R}^m \to \mathbb{R}^d$ belong to the space $\mathcal{C}^1_p(\RR^m)$.   Then for $p > \frac{1}{H}$, we have 
  $$
  \mathbb{E}( |Z_{g,T}|^p) \leq C T^{pH} \,,
  $$
  for all $T > 0$, where $C > 0$  is independent of $T$.
\end{enumerate}
\end{prop}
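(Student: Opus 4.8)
The plan is to handle the two regimes with the two distinct moment estimates established above: for $H \in (\frac{1}{4},\frac{1}{2})$ I would invoke Proposition \ref{div.pmom}, and for $H \in (\frac{1}{2},1)$ I would invoke Lemma \ref{pnorm.g}. In both cases the process fed into these results is $u_t = g(X_t)$, so the work reduces to verifying the relevant regularity inputs and then reading off the claimed powers of $T$.

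\textbf{Case $H \in (\frac{1}{4},\frac{1}{2})$.} Here I would show that $u_t = g(X_t)$ satisfies Hypothesis \ref{hypo.u} with $\beta = H$ and an arbitrary $\lambda \in (0,H]$, and then apply Proposition \ref{div.pmom} verbatim. Condition (i) holds because $g \in \mathcal{C}^2_p(\RR^m)$ has polynomial growth, so $\|g(X_t)\|_{L^p} \le L_2(1 + \|X_t\|_{L^{p\gamma}}^\gamma)$ is uniformly bounded by Proposition \ref{p.moment}(1). Condition (ii) follows from the representation $g(X_t) - g(X_s) = \left(\int_0^1 \nabla g(X_s + r(X_t - X_s))\,dr\right)(X_t - X_s)$, Hölder's inequality, the polynomial growth of $\nabla g$, and the increment bound $\|X_t - X_s\|_{L^{2p}} \le C|t-s|^H$ from Proposition \ref{p.moment}(1); the resulting exponent $\beta = H$ obeys $\beta > \frac{1}{2}-H$ precisely because $H > \frac{1}{4}$. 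Conditions (iii) and (iv) are exactly the bounds \eqref{dg.norm} and \eqref{ddg.norm} of Lemma \ref{derg.norm}. With these in hand, Proposition \ref{div.pmom} delivers $\mathbb{E}(|Z_{g,T}|^p) \le C T^{pH}(1 + T^{p\lambda})(1 + T^{pH})$, which is the asserted estimate.

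\textbf{Case $H \in (\frac{1}{2},1)$.} Here I would apply Lemma \ref{pnorm.g} to $u = g(X)\mathbb{1}_{[0,T]}$, splitting the bound into $\|\mathbb{E}(g(X))\mathbb{1}_{[0,T]}\|_{L^{1/H}}$ and $\mathbb{E}(\|D(g(X)\mathbb{1}_{[0,T]})\|_{L^{1/H}}^p)$. For the first term, $|\mathbb{E}(g(X_t))| \le \mathbb{E}|g(X_t)|$ is bounded uniformly in $t$, so $\int_0^T |\mathbb{E}(g(X_t))|^{1/H}\,dt \le CT$, contributing order $T^{pH}$. For the second, I would write $D_t g(X_s) = \nabla g(X_s)\,D_t X_s$, carry out the inner $t$-integral using the exponential decay $|D_t X_s| \le |\sigma|e^{-L_1(s-t)}$ from \eqref{ineq.dext} to get $\int_0^s |D_t g(X_s)|^{1/H}\,dt \le C|\nabla g(X_s)|^{1/H}$, and then apply Minkowski's integral inequality in $L^{pH}(\Omega)$ (legitimate since $pH > 1$) to reduce the remaining expectation to $\int_0^T \|\nabla g(X_s)\|_{L^p}^{1/H}\,ds \le CT$, again of order $T^{pH}$. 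Summing the two contributions gives $\mathbb{E}(|Z_{g,T}|^p) \le C T^{pH}$.

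Most of this is assembly: the genuine content sits in the results I am allowed to cite, namely the sharpened Hölder-in-$L^p$ regularity of the Malliavin derivative in Lemma \ref{derg.norm} (which supplies the delicate conditions (iii) and (iv)) together with Proposition \ref{div.pmom}. I expect the only point demanding real care to be the parameter bookkeeping: in the first case one must check that $\beta = H$ clears the admissibility threshold $\beta > \frac{1}{2}-H$ of Hypothesis \ref{hypo.u}, and this is exactly where $H > \frac{1}{4}$ is forced, explaining why the argument cannot be pushed below $H = \frac{1}{4}$. In the second case the analogous subtlety is that both Minkowski's inequality and the embedding \eqref{lh.norm} into $\mathfrak{H}^d$ require $pH \ge 1$, which is guaranteed by the hypothesis $p > \frac{1}{H}$.
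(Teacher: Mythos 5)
Your proposal is correct and follows essentially the same route as the paper's own proof: for $H \in (\frac{1}{4},\frac{1}{2})$ you verify Hypothesis \ref{hypo.u} with $\beta = H$ via Proposition \ref{p.moment} and Lemma \ref{derg.norm} and then invoke Proposition \ref{div.pmom}, and for $H \in (\frac{1}{2},1)$ you apply Lemma \ref{pnorm.g}, kill the inner integral with the exponential decay \eqref{ineq.dext}, and finish with Minkowski's integral inequality in $L^{pH}(\Omega)$ --- exactly as the paper does. The only difference is cosmetic: you spell out the mean-value argument for condition (ii) of Hypothesis \ref{hypo.u}, which the paper leaves implicit when it attributes (i) and (ii) directly to Proposition \ref{p.moment}.
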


\begin{proof}
First, for $H \in (\frac 14, \frac{1}{2})$, by  Proposition \ref{p.moment}, the process $\{g(X_t), t \geq 0\}$ satisfies  conditions (i) and (ii) of Hypothesis \ref{hypo.u} with $\beta=H$, which requires $H> \frac{1}{2} - H$, i.e., $H > \frac{1}{4}$. By  \eqref{ddg.norm} and \eqref{dg.norm} of  Lemma  \ref{derg.norm}, $Dg(X_t)$ satisfies  conditions (iii) and (iv) of Hypothesis \ref{hypo.u} with $\beta=H$ and $\lambda \in (0,H]$. By Proposition \ref{div.pmom}, we obtain the result.

Second, for $H \in (\frac{1}{2}, 1)$, applying the results in the preceding Proposition \ref{p.moment}, we get that $g(X_t)$ and $\nabla g(X_t)$ are bounded in $L^p(\Omega)$, so clearly $g(X_t)$ is in the space $\mathbb{D}^{1,p}(\mathfrak{H}^d)$. Applying Lemma \ref{pnorm.g} to $Z_{g,T}$ yields
\[
  \mathbb{E} (|Z_{g,T}|^p) \leq C_{p,H} \left(  \left(\int_0^T \mathbb{E}(|g(X_t)|^{\frac{1}{H}} )dt \right)^{pH} + \mathbb{E} \left( \int_0^T \int_0^t |D_s g(X_t)|^{\frac{1}{H}} ds dt \right)^{pH} \right) \,.
\]
Then we use \eqref{ineq.dext} and integrate $s$ to obtain
\begin{eqnarray*}
  \mathbb{E}( |Z_{g,T}|^p) &\leq& C_{p,H} \left(  \left(\int_0^T \mathbb{E}(|g(X_t)|^{\frac{1}{H}}) dt \right)^{pH} + \frac{|\sigma|^p H^{pH}}{L_1^{pH}} \mathbb{E} \left( \int_0^T  |\nabla g(X_t)|^{\frac{1}{H}} (1-e^{-\frac{L_1}{H}t}) dt \right)^{pH} \right) \\
  &\leq& C_{p,H} \left( \int_0^T \mathbb{E}(|g(X_t)|^{\frac{1}{H}}) dt \right)^{pH} + C_{p,H,L_1,\sigma} \left(\int_0^T \left(\mathbb{E}|\nabla g(X_t)|^p\right)^{\frac{1}{pH}} dt \right)^{pH} \leq  CT^{pH} \,.
\end{eqnarray*}
This concludes the proof.
\end{proof}

\subsection{Proof of Theorem \ref{thm.cons}}

 The following lemma is an important ingredient of the proof of Theorem \ref{thm.cons}.
\begin{lemma}\label{f.erg.pos}
	Suppose $f$ satisfies  $ \mathbb{P} \left(  \det (f^{tr}f)(\overline X) >0 \right) >0$, then $\mathbb{E} \left( (f^{tr}f)(\overline X)  \right)$ is invertible. 
\end{lemma}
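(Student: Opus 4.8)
The plan is to exploit the elementary structure of $(f^{tr}f)(x)=f^{tr}(x)f(x)$: for every $x\in\mathbb{R}^m$ this is a symmetric, positive semidefinite $l\times l$ matrix, since $v^{tr}(f^{tr}f)(x)v=|f(x)v|^2\ge 0$ for all $v\in\mathbb{R}^l$. Hence the expectation $M:=\mathbb{E}\big[(f^{tr}f)(\overline X)\big]$ is again symmetric and positive semidefinite, and proving that $M$ is invertible is the same as proving that it is positive definite. Before anything else I would verify that $M$ is well defined and finite: its entries are dominated by $|f(\overline X)|^2$, which the growth bound \eqref{1.4} controls by $L_2^2(1+|\overline X|^\gamma)^2$, and this has finite expectation because Theorem \ref{ergodic} guarantees $\mathbb{E}|\overline X|^p<\infty$ for all $p\ge 1$.

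The core of the argument is a short contradiction. Suppose $M$ is \emph{not} invertible. Then there is a vector $v\in\mathbb{R}^l$, $v\neq 0$, with $Mv=0$, and in particular $v^{tr}Mv=0$. Pulling the expectation outside the (deterministic) quadratic form gives
\[
0=v^{tr}Mv=\mathbb{E}\big[v^{tr}(f^{tr}f)(\overline X)v\big]=\mathbb{E}\big[|f(\overline X)v|^2\big].
\]
Since the integrand $|f(\overline X)v|^2$ is non-negative and has vanishing expectation, it must vanish almost surely, i.e. $f(\overline X)v=0$ a.s.

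From $f(\overline X)v=0$ a.s. I would then conclude $(f^{tr}f)(\overline X)v=f^{tr}(\overline X)\big(f(\overline X)v\big)=0$ a.s., so the nonzero vector $v$ lies in the kernel of $(f^{tr}f)(\overline X)$ on a set of full probability. Consequently $\det(f^{tr}f)(\overline X)=0$ almost surely, which directly contradicts the hypothesis $\mathbb{P}\big(\det(f^{tr}f)(\overline X)>0\big)>0$. Therefore $M$ has trivial kernel and is invertible.

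I do not anticipate a serious obstacle, as the statement reduces to linear algebra combined with the fact that a non-negative random variable of zero mean vanishes almost surely. The only point genuinely requiring care is the integrability of $(f^{tr}f)(\overline X)$ needed to form $M$, which is precisely why the polynomial growth condition on $f$ and the moment bounds on $\overline X$ from Theorem \ref{ergodic} are invoked at the very start.
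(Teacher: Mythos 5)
Your proof is correct, but it takes a genuinely different route from the paper's. The paper's argument is a one-liner: writing $\nu$ for the law of $\overline X$, it applies the Minkowski determinantal inequality (equivalently, concavity of $A\mapsto\det(A)^{1/l}$ on the cone of positive semidefinite $l\times l$ matrices) together with Jensen's inequality to obtain
\[
\det\Bigl(\int_{\mathbb{R}^m}(f^{tr}f)(x)\,\nu(dx)\Bigr)^{\frac{1}{l}}\;\ge\;\int_{\mathbb{R}^m}\det\bigl((f^{tr}f)(x)\bigr)^{\frac{1}{l}}\,\nu(dx)\,,
\]
and the right-hand side is strictly positive under the hypothesis, since the integrand is non-negative everywhere (by positive semidefiniteness) and strictly positive on a set of positive $\nu$-measure. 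Your contradiction argument --- take $v\neq 0$ in the kernel of $M=\mathbb{E}[(f^{tr}f)(\overline X)]$, observe $0=v^{tr}Mv=\mathbb{E}\bigl[|f(\overline X)v|^2\bigr]$, conclude $f(\overline X)v=0$ a.s. and hence $\det(f^{tr}f)(\overline X)=0$ a.s. --- replaces this matrix inequality by purely elementary facts: positive semidefiniteness of $f^{tr}f$ and the almost sure vanishing of a non-negative random variable with zero mean. What your approach buys is self-containedness (no appeal to the Minkowski determinant inequality, which is the only non-trivial external input in the paper's proof) and an explicit verification of the integrability of $(f^{tr}f)(\overline X)$ via \eqref{1.4} and Theorem \ref{ergodic}, a point the paper leaves implicit. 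What the paper's approach buys is brevity and a quantitative conclusion: it yields the lower bound $\det(M)\ge\bigl(\mathbb{E}[\det((f^{tr}f)(\overline X))^{1/l}]\bigr)^{l}$, rather than mere invertibility. Both proofs are complete and correct.
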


\begin{proof} Let $\nu$ be the law of $\overline{X}$.
  Applying Minkowski determinantal inequality and Jensen's inequality yields
   $$\text{det}\left(\int_{\mathbb{R}^m}(f^{tr}f)(x) \nu(dx)\right)^{\frac{1}{l}} \geq \int_{\mathbb{R}^m} \text{det}\left((f^{tr}f)(x)\right)^{{\frac{1}{l}}} \nu(dx)\,,$$	
   which is positive under  our hypothesis. 
\end{proof}

Next we proceed to prove Theorem \ref{thm.cons}. Recall that the estimator $\hat\theta_T$ is given  by \eqref{theta.est}. 
 By Theorem \ref{ergodic}, we have 
\[
\frac{1}{T} \int_0^T (f^{tr}f)(X_t)dt \to \mathbb{E} \left((f^{tr}f)(\overline X)\right)  \quad {\rm a.s.} \,,
\]
which is invertible. Therefore,
\begin{equation}  \label{kl1}
 \left( \frac{1}{T} \int_0^T (f^{tr}f)(X_t)dt \right) ^{-1} \to \left( \mathbb{E} \left((f^{tr}f)(\overline X)\right)  \right)^{-1} \quad {\rm a.s.} \,.
\end{equation}
  Fix $j=1, \dots, l$ and consider the function
$g_j (x)= f_j^{tr}(x) \sigma: \R^m \rightarrow \R^d$.
Denote 
\[
 Z_{ j,t} = \int_0^t  g_j (X_s)  dB_s   = \int_0^t f^{tr} _j (X_s) \sigma dB_s.
 \]
for $j=1, \ldots, l$.  Taking into account (\ref{kl1}), to show 
$\lim_{T\rightarrow \infty}
 \frac 1T   |\hat \theta_T-\theta|=0$  it suffices to show 
\begin{equation} \label{ecua1}
\lim_{T\rightarrow \infty}
 \frac 1T      Z_{j,T}  =0
\end{equation}
 for each $j=1, \ldots, l$.    
 The proof of (\ref{ecua1}) will be done in two steps.

\medskip
\noindent{ \it Step 1}:  Fix $j=1, \dots, l$. We  first show that
\[
\lim_{n \rightarrow \infty}
  n^{-1}  Z_{j,n}  =0.
\]
Since the components of  $f$ belong to the space $\mathcal{C}^i_p(\R^m)$ with $i=1,2$, depending on $H>\frac 12$ or $H<\frac 12$, respectively,   clearly the function $g_j(x) $   satisfies the conditions in Proposition \ref{zt.pnorm}.  Applying Proposition \ref{zt.pnorm}, 
\begin{equation}
   \EE( | Z_{j,n}|^p)    \leq 
   \begin{cases}
   C n^{pH } &\qquad \hbox{when\  $H\in (\frac{1}{2}, 1)$}\\
      Cn^{p(2H +\lambda) } &\qquad \hbox{when\  $H\in (\frac  14, \frac{1}{2})$} \,\\
     \end{cases} 
\label{e.4.14}
\end{equation}
for any $\lambda \in (0,H]$. We will choose $p$ and $\lambda$ in such a way that $p > \frac 1{1-H}$ if $H\in (\frac 12,1)$ and $0<\lambda < 1-2H$ and
$p>\frac 1 {1-2H-\lambda}$ if $H\in (0, \frac 12)$.

 On the other hand,  for any $\epsilon > 0$,  by Chebyshev inequality and the  above estimates we  have 
\begin{eqnarray*} 
 \sum_{n=1}^{\infty} \mathbb{P}(\left| n^{-1} Z_{j,n} \right| > \epsilon) 
 &\leq &   
 \sum_{n=1}^\infty  \epsilon^{-p}\EE\left(\left| n^{-1} Z_{j,n} \right|^p\right)\\
 &\leq &   
\begin{cases}
 C \sum_{n=1}^\infty  \epsilon^{-p} n^{(H-1)p}  &\qquad \hbox{when\  $H\in (\frac{1}{2}, 1)$}\\
C   \sum_{n=1}^\infty  \epsilon^{-p} n^{(2H+\lambda -1)p} &\qquad \hbox{when\  $H\in (0, \frac{1}{2})$}\\
\end{cases}\\
&<& \infty. 
\end{eqnarray*}  
By Borel-Cantelli lemma, $n^{-1} Z_{j,n} \to 0$ a.s. as $n \to \infty$. 

\medskip
\noindent
{\it Step 2:} For any $T>0$ we define the integer $k_T$ by $k_T \le T < k_T+1$. We  write 
\[
  \frac 1T Z_{j,T} =  \frac{k_T}{T} \frac{1}{k_T}\int_0^{k_T} g_j(X_t) dB_t   + \frac{1}{T} \int_{k_T}^T g_j(X_t) dB_t \,.
\]
Thus,
\[
\frac 1T \left|  Z_{j,T} \right|  \le   \frac{1}{k_T}   \left| \int_0^{k_T} g_j(X_t) dB_t   \right| + \frac{1}{T} \left|  \int_{k_T}^T g_j(X_t) dB_t  \right|\,.
\]
Clearly from Step 1 the first   summand  converges to $0$ almost surely as $T \to \infty$. For the second summand, observe that
\begin{equation}\label{eq1.cons}
 \frac 1T \left| \int_{k_T}^T g_j(X_t) dB_t \right|  \leq  \frac{1}{k_T} \sup_{t \in [k_T, k_T+1]} \left|\int_{k_T}^t g_j(X_s) dB_s\right| \,.
\end{equation}
Now we 
apply  Theorem \ref{div.maxineq} to the $p$-th moment of 
$\sup_{t \in [k_T, k_T+1]} \left|\int_{k_T}^t g_j(X_s) dB_s\right| $.  When $H\in (\frac{1}{2}, 1)$, we have  
 \begin{eqnarray*}
\EE\left[  \sup_{t \in [k_T, k_T+1]} \left|\int_{k_T}^t g_j(X_s) dB_s\right| ^p\right]    &\leq& C   \left( \int_{k_T}^{k_T+1} \mathbb{E}(|g_j(X_s)|^p) ds + \int_{k_T}^{k_T+1} \int_{k_T}^s \mathbb{E}(|D_{\mu} g_j(X_s)|^p )d\mu ds\right)  \\
  &\leq& C \int_{k_T}^{k_T+1} \mathbb{E} \left(|g_j(X_s)|^p + | \nabla g_j(X_s)|^p \right) ds \leq C \,.
\end{eqnarray*}
Similarly, for $H \in (\frac{1}{4}, \frac{1}{2})$,  $g_j$ belongs to $\mathcal{C}^2_p(\R^m)$, so  by Lemma \ref{derg.norm} it satisfies Hypothesis \ref{hypo.u}.  Then applying Theorem \ref{div.maxineq} yields
 \[
 \EE\left[  \sup_{t \in [k_T, k_T+1]} \left|\int_{k_T}^t g_j(X_s) dB_s\right| ^p\right]  \leq C (k_T + 1)^{p\lambda}\,
 \]
 for all $p > \frac{1}{H}$, and any $\lambda \in (0, H]$.
By Chebyshev inequality,
\begin{eqnarray*}\label{eq2.cons}
&& \mathbb{P} \left( \frac{1}{k_T} \sup_{t \in [k_T, k_T+1]} \left|\int_{k_T}^t g_j(X_s) dB_s\right| > \epsilon \right) \\
&& \leq  \epsilon^{-p} \EE
\left( \frac{1}{{k_T}^p} \sup_{t \in [k_T, k_T+1]} \left|\int_{k_T}^t g_j(X_s) dB_s\right|^p\right)  \le C \epsilon^{-p}  k_T^{p\lambda-p} \,.
\end{eqnarray*}
Choosing $p$ large enough, the above right-hand side is  summable with respect to $k_T$ and the desired result just follows from Borel-Cantelli Lemma.
This completes the proof of Theorem   \ref{thm.cons}.

 \begin{rem}
 	From the proof of Theorem \ref{thm.cons} we can see that the random variables $\xi_t = t^{-1} Z_{j,t}$ converge to $0$ as $t$ tends to infinity for every $j=1, \ldots, l$ in the following sense. For any $\epsilon>0$,
	$$\lim_{n \to \infty}\sum_{k=n}^\infty \mathbb{P}(\sup_{k \leq t \leq k+1} |\xi_t| > \epsilon) = 0\,.$$ This type of convergence is analogous to the complete convergence of a sequence of random variables (see \cite{hr}), which implies the almost sure convergence.
 \end{rem}
 \begin{rem}
 	If we assume that the parameter vector $\theta$ belongs to a compact set $\Theta \subset \mathbb{R}^l$, the upper bound of the $p$-th moment of  $X_t$  would be independent of $\theta$, and, correspondingly, the constants $C$ and $K$ that appear in Proposition \ref{p.moment}, Lemma \ref{derg.norm} and Proposition \ref{zt.pnorm} would be independent of $\theta$ as well. As a consequence, we get the uniform strong convergence of the random variables $\xi_t = t^{-1} Z_{j,t}$ to $0$ as $t$ tends to infinity for every $j=1, \ldots, l$, in the sense of 
	$$\lim_{T \to \infty} \sup_{\theta \in \Theta} \mathbb{P}( \sup_{t\ge T}|\xi_t| > \epsilon) = 0$$ 
for any $\epsilon > 0$. Furthermore, if the function $f$ satisfies  $(f^{tr}f)^{-1} \leq L_3 I_l$ where $L_3 > 0$ is a constant independent of $\theta$ and $I_l$ is an $l \times l$ identity matrix, the uniform strong consistency of $\hat{\theta}_T$ can be established  by observing that $\left(\frac{1}{T} \int_0^T (f^{tr} f) (X_t) dt \right)^{-1} \leq L_3 I_l$.
 \end{rem}

\section*{Acknowledgement} 
  David Nualart is supported by the NSF grant  DMS1512891.

\begin{minipage}{1.0\textwidth}
\vskip 1cm
Yaozhong Hu: Department of Mathematical and  Statistical Sciences, University of Alberta, Edmonton, Canada, T6G 2G1. {\it E-mail address: yaozhong@ualberta.ca}\\

David Nualart and Hongjuan Zhou: Department of Mathematics, University of Kansas, 405 Snow Hall, Lawrence, Kansas, 66045, USA. 
{\it E-mail address: nualart@ku.edu,  zhj@ku.edu}
\end{minipage}


\begin{thebibliography}{10}


\bibitem{AN}
Al\`os, E., and Nualart, D.
\newblock{A maximal inequality for the Skorohod integral}.
\newblock{\em In: Stochastic Differential and Difference Equations}. Progr. Systems Control Theory, {\bf 23}, Birkh\"auser Boston, Boston, MA, 1997.

\bibitem{nue}
Essaky, E., and Nualart, D. 
\newblock{On the 1/H-variation of the divergence integral with respect to fractional Brownian motion with Hurst parameter $H< \frac{1}{2}$}.
\newblock{\em Stochastic Process. Appl.} {\bf 125} (2015), no. 11: 4117-4141.

\bibitem{gkn}
Garrido-Atienza, M., Kloeden, P., and Neuenkirch A. 
\newblock{Discretization of stationary solutions of stochastic systems driven by fractional Brownian motion}.
\newblock{\em Appl. Math. Optim.} {\bf 60} (2009), no. 2: 151-172.

\bibitem{mh}
Hairer, M.
\newblock{Ergodicity of stochastic differential equations driven by fractional Brownian motion}.
\newblock{\em Ann. Probab.} {\bf 33} (2005), no. 2: 703-758.

\bibitem{hr}
Hsu, P., and Robbins, H. 
\newblock{Complete convergence and the law of large numbers}.
\newblock{\em Proc. Natl. Acad. Sci. USA} {\bf 33} (1947), no. 2: 25-31. 

\bibitem{hn}
Hu, Y., and Nualart, D. 
\newblock{Parameter estimation for fractional {O}rnstein-{U}hlenbeck processes}. 
\newblock{\em  Statist. Probab. Lett.} {\bf 80} (2010), no. 11-12: 1030-1038.
 
\bibitem{hnz}
Hu, Y., Nualart, D., and Zhou, H.
Parameter estimation for fractional Ornstein-Uhlenbeck processes of general
Hurst parameter. To appear at {\em Stat. Inference Stoch. Process.}
 
\bibitem{nt}
Neuenkirch, A., and Tindel, S. 
\newblock{A least square-type procedure for parameter estimation in stochastic differential equations with additive fractional noise}.
\newblock{\em Stat. Inference Stoch. Process.} {\bf 17} (2014), no. 1: 99-120.

\bibitem{nu}
Nualart, D. 
\newblock{The Malliavin calculus and related topics}.
\newblock{Second edition,} Springer, 2006.

\bibitem{tv}
Tudor, C., and Viens, F. 
\newblock{Statistical aspects of the fractional stochastic calculus}.
\newblock{\em Ann. Statist.} {\bf 35} (2007), no. 3: 1183-1212.

\end{thebibliography}
\end{document}